\def\newaliasedtheorem#1[#2]#3{
	\newaliascnt{#1@alt}{#2}
	\newtheorem{#1}[#1@alt]{#3}
	\expandafter\newcommand\csname #1@altname\endcsname{#3}
}
\theoremstyle{plain}
\newtheorem{theorem}{Theorem}[section]
\theoremstyle{definition}
\theoremstyle{remark}
\newcommand{\R}{\mathbb{R}}
\newcommand{\N}{\mathbb{N}}
\newcommand{\LL}{\mathbb{L}}
\newcommand{\C}{\mathbb{C}}
\newcommand{\mm}{\mathfrak {m}}
\let\altphi\phi
\let\phi\varphi
\let\varphi\altphi
\let\altphi\undefined
\newcommand{\average}{{\mathchoice {\kern1ex\vcenter{\hrule height.4pt
width 6pt
depth0pt} \kern-9.7pt} {\kern1ex\vcenter{\hrule height.4pt width 4.3pt
depth0pt}
\kern-7pt} {} {} }}
\address{\textsc{Daniela Di Donato}: 
Dipartimento di Ingegneria Industriale e Scienze Matematiche, Via Brecce Bianche, 12 60131 Ancona, Universit\'a Politecnica delle Marche.}
\email{d.didonato@staff.univpm.it}
\title{Intrinsic Cheeger energy for the intrinsically Lipschitz constants}
\date{\today}
\author{ Daniela Di Donato}
\subjclass[]{ 
26A16  
51F30 
54E35 
}
\keywords{Cheeger energy, Lipschitz graphs, vector space, Metric spaces}
\begin{document}

\begin{abstract} Recently, in metric spaces, Le Donne and the author introduced the so-called intrinsically Lipschitz sections. The main aim of this note is to adapt Cheeger theory for the classical Lipschitz constants in our new  context. More precisely,	we define the intrinsic Cheeger energy from $L^2(Y,\R^s)$  to $[0,+\infty],$ where $(Y,d_Y,\mm)$ is a metric measure space  and we characterize it in terms of a suitable notion of relaxed slope. In order to get this result, in more general context, we establish some properties of the intrinsically Lipschitz constants like the Leibniz formula, the product formula and the upper semicontinuity of the asymptotic intrinsically Lipschitz constant. 
\end{abstract}

\maketitle 
\tableofcontents

\section{Introduction}
Starting from the seminal papers by Franchi, Serapioni and Serra Cassano \cite{FSSC, FSSC03, MR2032504}, Le Donne and the author \cite{DDLD21} introduced and studied the so-called intrinsically Lipschitz sections in the metric spaces which generalize the intrinsically Lipschitz maps in the FSSC sense. The purpose of this note is to introduce and give some properties of  the Cheeger energy \cite{C99} in our intrinsic context. The main result is Theorem \ref{theorem30may}.  The long-term objective is to obtain \textbf{a Rademacher type theorem \'a la Cheeger } \cite{C99} for the intrinsically Lipschitz sections.
 
  This project fits to an active line of research \cite{Pansu, Vittone20, FMS14} on Rademacher type theorems, but it also studies connections to different notions and mathematical areas like Cheeger energy, Sobolev spaces, Optimal transport theory. The reader can see \cite{AGS08.2, AGS08.3, CEMCS, S06, MR2480619, MR2459454, H96, S00, K04, DM, HKST15}. 
 
In metric measure spaces, there are different approaches to define the Cheeger energy; one of those is represented by the theory of relaxed slopes. Here, we change a bit the classical notion in order to define the intrinsic Cheeger energy (see Section \ref{Cheeger energy of a metric measure space}).

Here we use a similar technique exploited in \cite[Lemma 2.2.4]{P17} and  \cite[Lemma 4.3]{ACDM15}.  A basic difference with the classical case is given by the fact we use a $non$ $usual$ Leibniz formula for the intrinsic Lipschitz constants. Conversely the usual case, we cannot obtain a result without additional condition on the fibers of $\pi$ and we get a non linear formula (see Theorem \ref{sommadiLipeLip.2senzaa}). On the other hand, we didn't expect the classic formula to work because we proved the Lipschitz constant for $\phi$ and for $\lambda \phi$ with $\lambda \ne 0$ is the same (see Proposition \ref{propconvex30}). Yet, it is no possible to get the usual Sobolev space using the intrinsic slope because we don't have homogeneity property. Moreover, it is interesting to underline that our corresponding triangle inequality can be true with a small constant between 1/2 and 1 in particular cases (cf. \eqref{equationLeibnitz} for $c\in [1,2)$).

 Thanks to the study of the Leibniz formula, we find a suitable setting in order that
\begin{equation*}
\phi, \psi \mbox{ are intrinsic Lipschitz} \quad \Longrightarrow \quad \phi + \psi \mbox{ is intrinsic Lipschitz}.
\end{equation*}
To prove this result (see Proposition \ref{sommadiLipeLip.1}), we use basic mathematical tools. Moreover, it is relevant because it is not known whether the sum of two Lipschitz maps in the FSSC sense is so too in subRiemannian Carnot groups \cite{ABB, BLU, CDPT} which are particular metric spaces. However, our result does not include Carnot groups since the projection map is not linear in this specific setting. On the other hand, it is no too restrictive to ask the linearity of a projection map.

 The rest of the paper is organized as follows. In $\mathbf{Section \, 2},$ we recall the definition of the intrinsically Lipschitz sections and we present some properties which we will use later. In $\mathbf{Section \, 3},$ we provide some basic properties of the intrinsic Lipschitz constants like the upper semicontinuity of the asymptotic intrinsically Lipschitz constant (see Proposition \ref{propSEMICONT}) and the product formula (see Proposition \ref{propslope.20may}).   
$\mathbf{Section \, 4}$ states the sum of two intrinsic Lipschitz sections is so too and the Leibniz formula is proved.  $\mathbf{Section \, 5}$  is the heart of this paper. Here, we define the intrinsic Cheeger energy and we give a characterization of it in terms of an appropriate notion of intrinsically relaxed slope.

\section{Intrinsically Lipschitz sections} In \cite{DDLD21}, we give the following notion. 
\begin{defi}\label{def_ILS} 
Let $(X,d)$ be a metric space, $Y$ be a topological space and $\pi:X\to Y$ be a quotient map, i.e.,  it is continuous, open, and surjective. We say that a map $\phi :Y \to X$ is a section of $\pi $ if 
\begin{equation}\label{equation1}
\pi \circ \phi =\mbox{id}_Y.
\end{equation}
Moreover, we say that a map $\phi:Y\to X$ is an {\em intrinsically Lipschitz section of $\pi$ with constant $L$},  with $L\in[1,\infty)$, if in addition
\begin{equation}\label{equationFINITA}
d(\phi (y_1), \phi (y_2)) \leq L d(\phi (y_1), \pi ^{-1} (y_2)), \quad \mbox{for all } y_1, y_2 \in Y.
\end{equation}
Here $d$ denotes the distance on $X$, and, as usual, for a subset $A\subset X$ and a point $x\in X$, we have
$d(x,A):=\inf\{d(x,a):a\in A\}$.
\end{defi}
A first observation is that we study a sort of biLipschitz condition; indeed, since $\phi (y_2) \in \pi ^{-1} (y_2)$ it holds
\begin{equation*}
d(\phi (y_1), \pi ^{-1} (y_2)) \leq d(\phi (y_1), \phi (y_2)) \leq L d(\phi (y_1), \pi ^{-1} (y_2)), \quad \mbox{for all } y_1, y_2 \in Y.
\end{equation*}
 Indeed, we underline that, in the case  $ \pi$ is a Lipschitz quotient or submetry \cite{MR1736929, Berestovski},  being intrinsically Lipschitz  is equivalent to biLipschitz embedding, see Proposition 2.4 in \cite{DDLD21}. Moreover, since $\phi$ is injective by \eqref{equation1}, the class of Lipschitz sections not include the constant maps.   

%
%

Finally, we present a result which will use later.

 \begin{prop}[Proposition 3.1-3.2 \cite{D22.1}]\label{propconvex30}  
Let $\pi: X\to Y$ be a linear and quotient map between a normed space $X$ and a topological space $Y.$ 
\begin{enumerate}
\item the set of all sections is a vector space over $\R$ or $\C.$
\item If $\phi: Y \to X$ is an intrinsically Lipschitz section of $\pi,$ then for any $\lambda \in \R-\{0\}$ the section $\lambda \phi$ is also intrinsic Lipschitz for $1/\lambda \pi$ with the same Lipschitz constant.

\end{enumerate}

\end{prop}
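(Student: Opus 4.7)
The plan is to treat the two assertions separately, verifying each directly from the defining relation $\pi\circ\phi=\mathrm{id}_Y$ together with the linearity of $\pi$ and the norm structure on $X$.

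For part (1), I would begin by recording that, because $\pi$ is linear, the equation defining a section behaves well under linear combinations: $\pi\circ(\lambda_1\phi_1+\lambda_2\phi_2)=\lambda_1(\pi\circ\phi_1)+\lambda_2(\pi\circ\phi_2)=(\lambda_1+\lambda_2)\,\mathrm{id}_Y$. Interpreted in the sense suggested by part (2) --- where rescaling $\phi$ is accompanied by a reciprocal rescaling of the projection --- this shows that the collection of section-projection pairs $(\phi,\pi)$ is closed under the natural scalar multiplication $\mu\cdot(\phi,\pi)=(\mu\phi,\pi/\mu)$ and under a corresponding addition. The remaining vector space axioms (associativity, commutativity, identity, inverse) are then inherited componentwise from the vector space structure on $X$ and on the space of linear maps, so the verification is essentially bookkeeping.

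For part (2), the verification splits into two short steps. First, $\lambda\phi$ is a section of $\tfrac{1}{\lambda}\pi$ by the one-line computation $\bigl(\tfrac{1}{\lambda}\pi\bigr)(\lambda\phi(y))=\pi(\phi(y))=y$. Second, to check that the intrinsic Lipschitz constant is unchanged, I would use two scaling identities. Since $d$ arises from the norm on $X$, one has $d(\lambda\phi(y_1),\lambda\phi(y_2))=|\lambda|\,d(\phi(y_1),\phi(y_2))$. Since $\pi$ is linear,
\begin{equation*}
\bigl(\tfrac{1}{\lambda}\pi\bigr)^{-1}(y_2)=\{x\in X:\pi(x)=\lambda y_2\}=\pi^{-1}(\lambda y_2)=\lambda\,\pi^{-1}(y_2),
\end{equation*}
the last identity following by writing $\pi^{-1}(y_2)=x_0+\ker\pi$ for any fixed $x_0\in\pi^{-1}(y_2)$ and using that $\lambda\neq 0$ preserves the subspace $\ker\pi$. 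Combining these gives $d\bigl(\lambda\phi(y_1),(\tfrac{1}{\lambda}\pi)^{-1}(y_2)\bigr)=|\lambda|\,d(\phi(y_1),\pi^{-1}(y_2))$, so the common factor $|\lambda|$ cancels on both sides of $d(\phi(y_1),\phi(y_2))\le L\,d(\phi(y_1),\pi^{-1}(y_2))$, yielding the intrinsic Lipschitz inequality for $\lambda\phi$ as a section of $\tfrac{1}{\lambda}\pi$ with the same constant $L$.

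The only real subtlety is in part (1): strictly, $\pi\circ(\phi_1+\phi_2)=2\,\mathrm{id}_Y\neq\mathrm{id}_Y$, so the sections of a single fixed $\pi$ form only an affine subspace of $X^Y$, not a vector space under pointwise operations. The statement must therefore be read as allowing the projection to rescale when the section is rescaled, precisely as made explicit in part (2). Once this convention is fixed, no technical obstacle remains and both assertions reduce to direct linear-algebraic manipulations.
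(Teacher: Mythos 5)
First, a point of comparison: the paper does not actually prove Proposition \ref{propconvex30}; it is imported from \cite{D22.1} as a citation, so there is no in-paper argument to measure yours against. The closest thing in the text is Lemma \ref{lem22a}, which establishes exactly the scaling identity $|\lambda|\,d(\phi(y_1),\pi^{-1}(y_2))=d(\lambda\phi(y_1),(\tfrac{1}{\lambda}\pi)^{-1}(y_2))$ that your part (2) rests on, by the same two-sided infimum computation. Your part (2) is correct and complete: the one-line check that $\lambda\phi$ is a section of $\tfrac{1}{\lambda}\pi$, the homogeneity of the norm in the numerator, and the identity $(\tfrac{1}{\lambda}\pi)^{-1}(y_2)=\lambda\,\pi^{-1}(y_2)$ in the denominator together cancel the common factor $|\lambda|$ and preserve the constant $L$. (Implicit here is that $Y$ carries enough linear structure for $\tfrac{1}{\lambda}\pi$ and $\lambda y_2$ to make sense; the paper assumes this silently whenever it writes $1/\lambda\,\pi$, so you are in good company.)

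On part (1) you have correctly put your finger on the real issue: for a fixed linear $\pi$ the sections form only an affine subset of $X^Y$ (a coset of the maps into $\ker\pi$), closed under combinations $\alpha\phi+\beta\psi$ with $\alpha+\beta=1$ but not under pointwise addition, since $\pi\circ(\phi+\psi)=2\,\mathrm{id}_Y$. Your proposed repair --- rescaling the projection along with the section, so that $\phi+\psi$ becomes a section of $\tfrac{1}{2}\pi$ --- is exactly the convention the rest of the paper operates under (see the recurring $(1/2\,\pi)^{-1}$ in Section 4). However, the ``bookkeeping'' you defer cannot actually be completed: under that convention the sum of a section of $\tfrac{1}{\lambda_1}\pi$ and a section of $\tfrac{1}{\lambda_2}\pi$ is a section of $\tfrac{1}{\lambda_1+\lambda_2}\pi$, which is undefined when $\lambda_1+\lambda_2=0$; in particular $\phi+(-\phi)=0$ has no home, consistent with the paper's own remark that constant maps are never sections. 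So part (1), read literally, does not hold for a fixed $\pi$ and does not quite hold under the rescaling convention either; the defensible statement is that sections of the family $\{\tfrac{1}{\lambda}\pi\}_{\lambda\neq 0}$ are closed under nonzero scalings and under sums with $\lambda_1+\lambda_2\neq 0$ (hence under convex combinations), which is all the paper ever uses. You should either prove that weaker closure statement explicitly or flag that part (1) is taken on faith from \cite{D22.1}.
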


 {\bf Acknowledgements.}  We would like to thank Giuseppina Autuori for helpful suggestions regarding the spaces $L^p(Y, \R^s, \mm)$.

\section{Intrinsic   Lipschitz  constants}
 We adapt the theory of  \cite{C99, DM} (see also \cite{K04}) in our intrinsic case. The following definitions are studied in \cite{D22.29}. Here, we give new results like the product formula and the upper semicontinuity of the asymptotic in\-trin\-si\-cally Lipschitz constant. 

\begin{defi}\label{def_ILS.1} Let $\phi:Y\to X$ be a section of $\pi$. Then we define 
\begin{equation*}
ILS (\phi):= \sup _{\substack{y_1, y_2 \in Y \\ y_1\ne y_2}} \frac{d(\phi (y_1), \phi (y_2))}{  d(\phi (y_1), \pi ^{-1} (y_2)) } \in [1,\infty ]
\end{equation*}
and
\begin{equation*}
\begin{aligned}
ILS (Y,X,\pi ) &:= \{ \phi :Y \to X \,:\, \phi \mbox{ is an intrinsically Lipschitz section of $\pi$ and }  ILS(\phi) < \infty \},\\
ILS_{b} (Y,X,\pi) & := \{ \phi \in  ILS (Y,X,\pi) \,:\, \mbox{spt}(\phi) \mbox{ is bounded} \}.\\
ILS_{0} (Y,X,\pi) & := \{ \phi \in  ILS (Y,X,\pi) \,:\, \mbox{spt}(\phi) \mbox{ is compact} \}.\\
\end{aligned}
\end{equation*}
For simplicity, we will write $ILS (Y,X)$ instead of  $ILS (Y,X,\pi ).$
\end{defi}

\begin{defi}\label{def_ILS.2} Let $\phi:Y\to X$ be a  section of $\pi$. Then we define the local intrinsically Lipschitz constant (also called intrinsic slope) of $\phi$ the map $Ils (\phi):Y \to [1,+\infty )$ defined as 
\begin{equation*}
Ils (\phi) (z):= \limsup _{y\to z} \frac{d(\phi (y), \phi (z))}{  d(\phi (y), \pi ^{-1} (z)) },
\end{equation*}
if $z \in Y$ is an accumulation point; and $Ils (\phi) (z):=0$ otherwise.
\end{defi}

\begin{defi}\label{def_ILS.3} Let $\phi:Y\to X$ be a section of $\pi$. Then we define the asymptotic  intrinsically Lipschitz constant of $\phi$ the map $Ils_a (\phi):Y \to [1,+\infty )$ given by
\begin{equation*}
Ils_a (\phi) (z):= \limsup _{y_1,y_2\to z}\frac{d(\phi (y_1),\phi (y_2))}{  d(\phi (y_1), \pi ^{-1} (y_2)) }
\end{equation*}
if $z \in Y$ is an accumulation point and $Ils (\phi) (z):=0$ otherwise.
\end{defi}

 \begin{rem}\label{defrem} Notice that by $\phi (y_2) \in \pi ^{-1} (y_2),$ it is trivial that $ d(\phi (y_1), \pi ^{-1} (y_2)) \leq d(\phi (y_1), \phi (y_2))$ and so $Ils(\phi) \geq 1.$ Moreover, it holds 
\begin{equation*}
 Ils (\phi ) \leq Ils_a (\phi ) \leq ILS(\phi).
\end{equation*}
\end{rem}

Following Proposition 2.1.11 in  \cite{P17}, we prove the upper semicontinuity of the asymptotic intrinsically Lipschitz constant.
\begin{prop}\label{propSEMICONT}
Let $\phi :Y \to X$ be a section of a linear and quotient map $\pi:X \to Y$ between two metric spaces. Then, the map $y \mapsto Ils_a(\phi)(y)$ is upper semicontinuous with respect to the metric topology of $Y,$ i.e.,
\begin{equation*}
\limsup _{ z\to y} Ils_a(\phi) (z) \leq Ils_a (\phi) (y), \quad \forall y\in Y.
\end{equation*}

\end{prop}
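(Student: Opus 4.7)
The plan is to follow the classical template for upper semicontinuity of a $\limsup$-type quantity: rewrite $Ils_a(\phi)$ as an infimum, over shrinking radii, of a local supremum, and exploit the monotonicity of this supremum under inclusion of balls. The fact that $\pi$ is a quotient between metric spaces and that $\phi(y_2) \in \pi^{-1}(y_2)$ ensures the ratios are well defined and non-negative, so nothing beyond this formal setup is actually needed.

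Concretely, for each $z \in Y$ and $r>0$ I would introduce the auxiliary quantity
\[
F_z(r) := \sup\left\{\frac{d(\phi(y_1), \phi(y_2))}{d(\phi(y_1), \pi^{-1}(y_2))} \,:\, y_1, y_2 \in B(z,r),\ y_1 \neq y_2\right\},
\]
with the convention $\sup \emptyset := 0$, so that the definition of $Ils_a$ in terms of $\limsup_{y_1,y_2\to z}$ translates to
\[
Ils_a(\phi)(z) \;=\; \inf_{r>0} F_z(r) \;=\; \lim_{r \to 0^+} F_z(r).
\]
The key observation is the trivial monotonicity: whenever $B(z, s) \subset B(y, r)$, the set over which $F_z(s)$ is computed is contained in the one defining $F_y(r)$, hence $F_z(s) \leq F_y(r)$.

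With this in hand the argument is short. Given $\varepsilon > 0$, I would pick $r>0$ so small that $F_y(r) < Ils_a(\phi)(y) + \varepsilon$. For every $z$ with $d(z,y) < r/2$ the triangle inequality yields $B(z, r/2) \subset B(y, r)$, and therefore
\[
Ils_a(\phi)(z) \;\leq\; F_z(r/2) \;\leq\; F_y(r) \;<\; Ils_a(\phi)(y) + \varepsilon.
\]
Taking $\limsup$ as $z \to y$ and then letting $\varepsilon \to 0^+$ concludes the proof.

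I do not expect any genuine obstacle; the step is essentially formal, and the main ingredient beyond the definition is just the monotonicity of $F_z(r)$ under ball inclusion. The only minor care point concerns the edge cases when $y$ or a nearby $z$ is not an accumulation point, in which case the convention $Ils_a(\phi) = 0$ resolves everything: if $y$ is isolated then $z=y$ for all $z$ in a sufficiently small ball and the inequality holds with equality, and if $z$ is isolated then the left-hand side is $0$ and the inequality is automatic.
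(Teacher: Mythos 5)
Your proof is correct and follows essentially the same route as the paper: both rewrite $Ils_a(\phi)$ as an infimum over radii of a local supremum and then use monotonicity of that supremum under ball inclusion (the paper uses $B(y_n,\varepsilon/3)\subset B(y,\varepsilon)$ where you use $B(z,r/2)\subset B(y,r)$). Your explicit treatment of the isolated-point edge cases is a small addition the paper omits, but the core argument is identical.
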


\begin{proof} Fix $y\in Y.$ 
We want to prove that if $(y_n)_n$ is a sequence in $Y$ such that $d_Y(y_n , y) \to 0,$ then
\begin{equation*}
\limsup _{ n\to \infty} Ils_a(\phi) (y_n) \leq Ils_a (\phi) (y).
\end{equation*}
By definition, it holds
\begin{equation*}
Ils_a (\phi) (y_n) = \inf _{\varepsilon >0} \sup _{\substack{z_1, z_2 \in B(y_n, \varepsilon ) \\ z_1\ne z_2}}    \frac{d(\phi (z_1),\phi (z_2))}{  d(\phi (z_1), \pi ^{-1} (z_2)) }.
\end{equation*}
Hence, fix $\varepsilon >0$ and we show that $$B\left (y_n, \frac \varepsilon 3\right) \subset B(y, \varepsilon),\quad \mbox{ for $n$ big enough.} $$  Indeed, since $y_n$ converge to $y$ we have that there is $N \in \N$ such that for any $n\geq N$ it holds $ d(y_n , y ) <\frac \varepsilon 3;$ on the other hand, if $z\in B(y_n, \frac \varepsilon 3)$ with $n\geq N$ we deduce that
\begin{equation*}
d(z,y) \leq d(z, y_n) + d(y_n, y) < \varepsilon,
\end{equation*}
i.e., $z \in B(y, \varepsilon).$ As a consequence, for such $n\in \N$ it follows
\begin{equation*}
Ils_a (\phi) (y_n) \leq  \sup _{\substack{z_1, z_2 \in B(y_n, \frac \varepsilon 3) \\ z_1\ne z_2}}    \frac{d(\phi (z_1),\phi (z_2))}{  d(\phi (z_1), \pi ^{-1} (z_2)) } \leq  \sup _{\substack{z_1, z_2 \in B(y, \varepsilon ) \\ z_1\ne z_2}}    \frac{d(\phi (z_1),\phi (z_2))}{  d(\phi (z_1), \pi ^{-1} (z_2)) }.
\end{equation*}
Eventually taking the limsup in $n$ and then the infimum in $\varepsilon$, we get the thesis.
\end{proof}

It is no possible to get the same statement for the intrinsically slope but we can define its semicontinuous envelope as follows for $X=\R$.
 \begin{defi}
Let $f:Y \to \R.$ We denote by $f^*$ ($f_*$ respectively) the upper semicontinuous (lower semicontinuous) envelope of $f, $ namely:
\begin{equation*}
\begin{aligned}
f^* (y) := \inf \{ g (y) \,:\, g \geq f , \, g (.) \mbox{ upper semicontinuous}\},\\
f _* (y) := \inf \{ g (y) \,:\, g \geq f, \, g (.) \mbox{ lower semicontinuous}\}.\\
\end{aligned}
\end{equation*}
\end{defi}

\begin{rem}\label{remperchegeerdopo}
 Obviously if $\phi(.)$ is already upper (lower) semicontinuous, its envelope coincides with $\phi$ itself. Moreover, 
 \begin{equation*}
 ILS(\phi) \geq   Ils_a (\phi )(y) \geq Ils (\phi)^* (y).
\end{equation*}
The first inequality comes from the definition of $ ILS(\phi),$ while the second one is due to the fact that the asymptotic Lipschitz constant of a function is upper semicontinuous (see Proposition \ref{propSEMICONT}) and bigger or equal than the slope.
 \end{rem}
    
  \subsection{Product for the intrinsic constants} Here, we give a condition in order to get a formula for the product of asymptotic intrinsically Lipschitz constants. It is easy to see that the same statement holds for the intrinsic slope.
         \begin{prop}[Product of asymptotic intrinsically Lipschitz constants]\label{propslope.20may} Let $Y$ be  a metric space and $\pi :\R^s \to Y$ be a linear and quotient map. Assume also that  $\phi $ and $ \psi $ are intrinsically $L$-Lipschitz sections of $\pi$ bounded by $M$ such that
         \begin{equation}\label{equation20maggio}
 \frac{\min  \{d(\phi  (z), \pi ^{-1} (y)), d(\psi  (z), \pi ^{-1} (y)) \}}{d(\phi(z)\psi (z) , \pi ^{-1} (y))} \leq k, \quad   \forall z,y \in Y.
\end{equation}
Then
\begin{equation}\label{equationLeibnitz.prod.20MAY}
Ils_a (\phi \psi )( y) \leq Mk Ils_a (\phi) ( y)+ Mk Ils_a (\psi) ( y), \quad \forall y\in Y.
\end{equation}

\end{prop}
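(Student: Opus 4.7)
The approach is to mimic the classical derivation of the Leibniz inequality for the asymptotic Lipschitz constant, using hypothesis \eqref{equation20maggio} as a surrogate for the triangle-type control of the denominator that is not directly available in the intrinsic setting.

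First, I would unravel the definition
\begin{equation*}
Ils_a(\phi\psi)(y) = \inf_{\varepsilon > 0} \sup_{\substack{y_1, y_2 \in B(y,\varepsilon) \\ y_1 \neq y_2}} \frac{d(\phi\psi(y_1), \phi\psi(y_2))}{d(\phi\psi(y_1), \pi^{-1}(y_2))}
\end{equation*}
and apply the componentwise product identity in $\R^s$,
\begin{equation*}
\phi(y_1)\psi(y_1) - \phi(y_2)\psi(y_2) = \phi(y_1)\bigl(\psi(y_1) - \psi(y_2)\bigr) + \bigl(\phi(y_1) - \phi(y_2)\bigr)\psi(y_2).
\end{equation*}
Combined with $\|\phi\|_\infty, \|\psi\|_\infty \leq M$ and the triangle inequality, this gives
\begin{equation*}
d\bigl(\phi\psi(y_1), \phi\psi(y_2)\bigr) \leq M\, d\bigl(\phi(y_1), \phi(y_2)\bigr) + M\, d\bigl(\psi(y_1), \psi(y_2)\bigr).
\end{equation*}

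Next, rewriting \eqref{equation20maggio} as the lower bound
\begin{equation*}
d\bigl(\phi\psi(y_1), \pi^{-1}(y_2)\bigr) \geq \tfrac{1}{k}\min\bigl\{d(\phi(y_1),\pi^{-1}(y_2)),\, d(\psi(y_1),\pi^{-1}(y_2))\bigr\},
\end{equation*}
I would insert this into the denominator of each of the two summands produced by the Leibniz split. On those pairs $(y_1, y_2)$ where $d(\phi(y_1),\pi^{-1}(y_2))$ attains the minimum, the $\phi$-summand is bounded by $Mk\, d(\phi(y_1),\phi(y_2))/d(\phi(y_1),\pi^{-1}(y_2))$, which passes in the limsup to $Mk\, Ils_a(\phi)(y)$; symmetrically on the complementary set of pairs with $\psi$ playing the role of $\phi$. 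Taking $\limsup$ as $y_1, y_2 \to y$ inside $B(y,\varepsilon)$, and then the infimum over $\varepsilon$, gives \eqref{equationLeibnitz.prod.20MAY}.

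The main obstacle is precisely that \eqref{equation20maggio} involves a \emph{minimum} rather than a maximum, so its lower bound on $d(\phi\psi(y_1), \pi^{-1}(y_2))$ controls only the smaller of the two fiber-distances for each given pair. To match both summands of the Leibniz split to the correct asymptotic slope, I expect to need a case analysis on which of $d(\phi(y_1),\pi^{-1}(y_2))$ and $d(\psi(y_1),\pi^{-1}(y_2))$ realises the minimum, complemented by the symmetric form of the product identity (with $\phi(y_2)$ and $\psi(y_1)$ as the outer factors) in order to redirect the problematic summand to the controlled fiber-distance. The standing $L$-Lipschitz hypothesis on $\phi$ and $\psi$ keeps all intermediate ratios finite, which legitimises passing to the limsup termwise and adding the two bounds.
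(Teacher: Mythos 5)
Your opening computation is exactly the paper's proof: the same Leibniz split of $\phi(z_1)\psi(z_1)-\phi(z_2)\psi(z_2)$, the same bound $d(\phi\psi(z_1),\phi\psi(z_2))\leq M\,d(\phi(z_1),\phi(z_2))+M\,d(\psi(z_1),\psi(z_2))$ via boundedness by $M$, the same division by $d(\phi(z_1)\psi(z_1),\pi^{-1}(z_2))$, and the same passage to the supremum over $z_1,z_2\in B(y,\varepsilon)$ followed by the infimum in $\varepsilon$. Where you diverge is in how \eqref{equation20maggio} is used. The paper simply asserts
\begin{equation*}
\frac{d(\phi(z_1),\pi^{-1}(z_2))}{d(\phi(z_1)\psi(z_1),\pi^{-1}(z_2))}\leq k
\qquad\text{and}\qquad
\frac{d(\psi(z_1),\pi^{-1}(z_2))}{d(\phi(z_1)\psi(z_1),\pi^{-1}(z_2))}\leq k
\end{equation*}
for \emph{both} factors, i.e.\ it reads the hypothesis as if the minimum were a maximum; with that reading your first two paragraphs already constitute the complete argument.

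You are right that, taken literally, \eqref{equation20maggio} controls only the smaller of the two fiber-distances, but the repair you sketch does not close this. The case analysis does not help: if, say, $d(\phi(z_1),\pi^{-1}(z_2))$ realises the minimum, the $\psi$-summand is
\begin{equation*}
M\,\frac{d(\psi(z_1),\psi(z_2))}{d(\phi(z_1)\psi(z_1),\pi^{-1}(z_2))}
= M\,\frac{d(\psi(z_1),\psi(z_2))}{d(\psi(z_1),\pi^{-1}(z_2))}\cdot\frac{d(\psi(z_1),\pi^{-1}(z_2))}{d(\phi(z_1)\psi(z_1),\pi^{-1}(z_2))},
\end{equation*}
and the second factor is precisely the uncontrolled ratio. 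Passing to the symmetric product identity only reshuffles the numerator: the denominator $d(\phi(z_1)\psi(z_1),\pi^{-1}(z_2))$ is dictated by the definition of $Ils_a(\phi\psi)$, and the fiber-distances you must reach, $d(\phi(z_1),\pi^{-1}(z_2))$ and $d(\psi(z_1),\pi^{-1}(z_2))$, are dictated by the definitions of $Ils_a(\phi)$ and $Ils_a(\psi)$, so no choice of outer factors changes which ratio is needed. Routing the problematic summand through the controlled fiber-distance instead yields the mixed quantity $d(\psi(z_1),\psi(z_2))/d(\phi(z_1),\pi^{-1}(z_2))$, whose limsup is not $Ils_a(\psi)(y)$. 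So either the hypothesis should be stated and used with a maximum in place of the minimum --- which is evidently how the paper intends it, and then your argument is complete and identical to the paper's --- or a genuinely new idea is needed for the uncontrolled term.
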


 \begin{proof} 
Fix $\varepsilon >0$ and $y\in Y.$ Notice that  for every $z_1, z_2 \in B(y, \varepsilon )$ with $z_1\ne z_2$ we have
\begin{equation*}
\begin{aligned}
d(\phi (z_1) \psi (z_1), \phi( z_2) \psi ( z_2))& \leq d(\phi (z_1) \psi (z_1), \phi( z_1) \psi ( z_2)) + d(\phi (z_1) \psi ( z_2), \phi( z_2) \psi (z_2))\\
& \leq M d( \psi (z_1), \psi (z_2)) +M d(\phi (z_1), \phi(z_2))\\
\end{aligned}
\end{equation*}
Hence, dividing for  $d(\phi (z_1) \psi (z_1), \pi ^{-1} (z_2))$    we obtain
 \begin{equation*}
\begin{aligned}
& \frac{ d(\phi (z_1) \psi (z_1), \phi( z_2) \psi ( z_2)) }{d(\phi (z_1) \psi (z_1), \pi ^{-1} (z_2)) } \\
& \leq  M \frac{   d(\phi (z_2), \phi (z_1))}{d(\phi  (z_1), \pi ^{-1} (z_2))}  \frac{  d(\phi  (z_1), \pi ^{-1} (z_2)) }{d(\phi (z_1) \psi (z_1), \pi ^{-1} (z_2))} +
M \frac{ d(\psi (z_2),\psi (z_1))}{d(\psi (z_1), \pi ^{-1} (z_2))}   \frac{  d(\psi (z_1), \pi ^{-1} (z_2)) }{d(\phi (z_1) \psi (z_1), \pi ^{-1} (z_2)) }  \\
 & \leq  M k\frac{   d(\phi (z_2), \phi (z_1))}{d(\phi  (z_1), \pi ^{-1} (z_2))} +
Mk \frac{ d(\psi (z_2),\psi (z_1))}{d(\psi (z_1), \pi ^{-1} (z_2))} , \\
\end{aligned}
\end{equation*}
where in the second equality we used  the hypothesis \eqref{equation20maggio}.

Now taking the supremum in $z_1, z_2 \in B( y , \varepsilon)$ and then the infimum in $\varepsilon$,  we get the thesis \eqref{equationLeibnitz.prod.20MAY}.
\end{proof}

%

\section{Sum of intrinsically Lipschitz sections} 
\subsection{Sum of intrinsically Lipschitz sections is so too}\label{Sum of intrinsically Lipschitz sections is so too}  In this section, we prove that the sum of Lipschitz section is so too. This is relevant essentially because, in the case of the Carnot groups, but actually already in the model case of the Heisenberg groups, it is not known whether the sum of two Lipschitz maps in the sense of FSSC is still Lipschitz. However we ask that $\pi$ is linear and in Carnot groups this fact is not true. 

We notice that in order to obtain our result, we need a suitable parallel property of the fibers (see Lemma \ref{lem19}) which is trivial when $\pi$ is a submetry (and so intrinsic Lipschitz section is equivalent to ask biLipschitz embedding).

 \begin{theorem}\label{sommadiLipeLip.1} 
 Let  $Y$ be a metric space and $\pi: \R^s \to Y$ be a linear and quotient map. Then, the sum of two intrinsically Lipschitz sections is so too. 
\end{theorem}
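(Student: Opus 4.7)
The plan is to exploit the \emph{parallel property} of the fibers of $\pi$, which is the content of Lemma \ref{lem19}. Since $\pi\colon\R^{s}\to Y$ is linear, every fiber $\pi^{-1}(y)$ is an affine translate of the linear subspace $V_{0}:=\ker\pi$. I would first record the consequence that, whenever $x,x'\in\pi^{-1}(y_{1})$, one has $x-x'\in V_{0}$, and translation invariance of the norm on $\R^{s}$ yields
\[
d\bigl(x,\pi^{-1}(y_{2})\bigr)=d\bigl(x',\pi^{-1}(y_{2})\bigr).
\]
Hence the quantity $\rho(y_{1},y_{2}):=d\bigl(\sigma(y_{1}),\pi^{-1}(y_{2})\bigr)$ is independent of the section $\sigma$, is symmetric in its arguments, and (because $V_{0}$ is a linear subspace, so scalar multiplication maps $V_{0}$ to $V_{0}$) is homogeneous: $\rho(ay_{1},ay_{2})=|a|\,\rho(y_{1},y_{2})$ for every $a\in\R$.

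With this in hand, let $\phi,\psi$ be intrinsically Lipschitz sections of $\pi$, and set $L:=\max\{ILS(\phi),ILS(\psi)\}$. Linearity of $\pi$ gives $\pi\bigl((\phi+\psi)(y)\bigr)=2y$, so $\phi+\psi$ is a section of the rescaled map $\pi/2$, whose fibers are $(\pi/2)^{-1}(y)=\pi^{-1}(2y)$. Using the parallel property together with the homogeneity of $\rho$, the denominator in the intrinsic Lipschitz ratio of $\phi+\psi$ comes out cleanly as
\[
d\bigl((\phi+\psi)(y_{1}),\,(\pi/2)^{-1}(y_{2})\bigr)=\rho(2y_{1},2y_{2})=2\rho(y_{1},y_{2}).
\]

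For the numerator, the triangle inequality in $\R^{s}$ together with the intrinsic Lipschitz hypothesis applied separately to $\phi$ and $\psi$ gives
\[
\norm{(\phi+\psi)(y_{1})-(\phi+\psi)(y_{2})}\le\norm{\phi(y_{1})-\phi(y_{2})}+\norm{\psi(y_{1})-\psi(y_{2})}\le 2L\,\rho(y_{1},y_{2}).
\]
Dividing the two estimates shows that the intrinsic Lipschitz ratio of $\phi+\psi$, viewed as a section of $\pi/2$, is bounded by $L$; equivalently, by Proposition \ref{propconvex30}(2) applied with $\lambda=1/2$, the genuine $\pi$-section $(\phi+\psi)/2$ is intrinsically $L$-Lipschitz. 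The only real obstacle is isolating the parallel property and observing that, crucially, $\rho(y_{1},y_{2})$ does not depend on the section at all; once that is in place the rest is just linearity and the triangle inequality. This is also precisely why the argument does not transfer to Carnot groups, as the author remarks, since there the projection fails to be $\R$-linear and the fibers are no longer translates of a fixed linear subspace.
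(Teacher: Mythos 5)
Your proof is correct, and it takes a genuinely different route from the paper's. The paper argues qualitatively: it reads the inequality $d(\eta(y_1),\eta(y_2))\le L\, d(\eta(y_1),(1/2\,\pi)^{-1}(y_2))$ through a right-triangle picture, identifies $L$ with $1/\sin\alpha$ for the angle $\alpha$ between the chord joining $\eta(y_1)$ to $\eta(y_2)$ and the fiber, and concludes from $\alpha\neq 0$ that a finite $L$ exists; notably, that argument never invokes the intrinsic Lipschitz hypothesis on $\phi$ and $\psi$, and it leaves implicit the uniformity of $\alpha$ over all pairs $(y_1,y_2)$. You instead extract from linearity the fact that every fiber is a translate of $\ker\pi$, hence that the denominator $d(\sigma(y_1),\pi^{-1}(y_2))=d(\pi^{-1}(y_1),\pi^{-1}(y_2))=\rho(y_1,y_2)$ is independent of the section and homogeneous under scaling --- which is exactly the content of Lemma \ref{lem19}, via $(\lambda^{-1}\pi)^{-1}(y)=\pi^{-1}(\lambda y)$. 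This reduces the intrinsic Lipschitz condition to an ordinary Lipschitz condition with respect to $\rho$, for which additivity is immediate after the triangle inequality, and it buys an explicit constant: $(\phi+\psi)/2$ is an intrinsically $\max\{ILS(\phi),ILS(\psi)\}$-Lipschitz section of $\pi$ (equivalently $\phi+\psi$ is one of $\pi/2$), whereas the paper only asserts the existence of some finite $L$. Your route also makes the uniformity in $(y_1,y_2)$ --- the delicate point that the triangle picture glosses over --- completely transparent, and your closing observation that the mechanism breaks precisely where the fibers cease to be parallel translates of a fixed subspace matches the paper's remark about Carnot groups.
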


\begin{proof}
 The thesis follows using basic mathematical tools. Indeed, for any $y_1, y_2 \in Y$ with $y_1\ne y_2$ the inequality 
\begin{equation}\label{sommadiLipeLip.1dis}
d(\eta(y_1), \eta (y_2)) \leq L d(\eta (y_1) , 1/2 \pi^{-1} (y_2) ),
\end{equation}
can be described in the following way: suppose that the trivial case $d(\eta(y_1), \eta (y_2)) \ne d(\eta (y_1) , 1/2 \pi^{-1} (y_2) )$ is not true; then, we have a right triangle where 
\begin{enumerate}
\item $\eta(y_1), \eta (y_2)$ are two vertices;
\item $d(\eta(y_1), \eta (y_2))$  is the hypotenuse; 
\item  either $d(\eta (y_1) , 1/2 \pi^{-1} (y_2) )$ is a cathetus or it is a larger segment of the cathetus with vertex $\eta (y_1)$;
\item  $L= 1/ \sin \alpha$ where $\alpha \in (0,\pi /2)$ is the angle between the two fix sides: the hypotenuse and the other cathetus (i.e., the segment which connects $\eta (y_2)$ and the point of minimal distant belong to the fiber).
\end{enumerate}
 The important point now is that $\alpha \ne 0$ and so there is a sufficiently large and finite constant $L$ which satisfies \eqref{sommadiLipeLip.1dis}. 
\end{proof}

Now we want to obtain the Leibniz formula for the  intrinsic slope using an additional suitable condition on the fibers.

 \begin{theorem}[Leibniz formula]\label{sommadiLipeLip.2senzaa} 
 Let $Y$ be a metric space and $\pi: \R^s \to Y$ be a linear and quotient map. Assume also that $\phi $ and $ \psi $ are intrinsically $L$-Lipschitz sections of $\pi$ such that there is $c \geq 1$ satisfying  
 \begin{equation}\label{equationSobolev10}
d(\pi ^{-1} (y),\pi ^{-1} (z) ) \geq \frac 1 c d(f (y), \pi ^{-1} (z)), \quad \, \forall y,z \in Y,
\end{equation}
$\mbox{for } f=\phi , \psi .$  Then, denoting $\eta = \alpha \phi + \beta \psi $ the map $Y\to \R^s$ with $\alpha , \beta \in \R-  \{ 0 \},$  we have that
\begin{equation}\label{equationLeibnitz.7}
Ils (\eta )( y) \leq   c /2  (  Ils (\phi) ( y)+   Ils (\psi) ( y)),\quad \forall y\in Y.
\end{equation}

\end{theorem}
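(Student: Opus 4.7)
The plan is to estimate the ratio defining $Ils(\eta)(z)$ directly, by triangle inequality on the numerator and by using hypothesis \eqref{equationSobolev10} together with the linearity of $\pi$ to lower bound the denominator. Assume $z$ is an accumulation point of $Y$ (the other case is trivial) and let $y \to z$. Since $\R^s$ is normed,
\begin{equation*}
d(\eta(y), \eta(z)) = \|\alpha(\phi(y) - \phi(z)) + \beta(\psi(y) - \psi(z))\| \leq |\alpha|\, d(\phi(y), \phi(z)) + |\beta|\, d(\psi(y), \psi(z)),
\end{equation*}
so after multiplying and dividing by $d(\phi(y), \pi^{-1}(z))$ and $d(\psi(y), \pi^{-1}(z))$ respectively, the ratio splits as
\begin{equation*}
\frac{d(\eta(y), \eta(z))}{d(\eta(y), \pi^{-1}(z))} \leq |\alpha|\, \frac{d(\phi(y), \phi(z))}{d(\phi(y), \pi^{-1}(z))}\, A_\phi(y) + |\beta|\, \frac{d(\psi(y), \psi(z))}{d(\psi(y), \pi^{-1}(z))}\, A_\psi(y),
\end{equation*}
where $A_f(y) := d(f(y), \pi^{-1}(z))/d(\eta(y), \pi^{-1}(z))$ is a correction factor.

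The main step is to control $A_\phi$ and $A_\psi$ uniformly. This is where the linearity of $\pi$ enters essentially: the fibers $\pi^{-1}(w)$ of a linear quotient are parallel affine subspaces of $\R^s$, so for any $x \in \R^s$ one has $d(x, \pi^{-1}(z)) \geq d(\pi^{-1}(\pi(x)), \pi^{-1}(z))$. Applied to $x = \eta(y)$, this yields a lower bound on the denominator of $A_f(y)$, while hypothesis \eqref{equationSobolev10} gives the matching upper bound $d(f(y), \pi^{-1}(z)) \leq c\, d(\pi^{-1}(y), \pi^{-1}(z))$ on its numerator, for each $f \in \{\phi, \psi\}$. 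Balancing the resulting estimate symmetrically between the two summands is what produces the constant $c/2$.

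Finally, one passes to the limsup as $y \to z$: the first factor of each summand converges to $Ils(\phi)(z)$, resp.\ $Ils(\psi)(z)$, by Definition \ref{def_ILS.2}, while $A_\phi, A_\psi$ remain uniformly bounded; combined with the elementary inequality $\limsup(a_y b_y) \leq (\sup b_y)\,\limsup a_y$, this yields \eqref{equationLeibnitz.7}. The step I expect to require the most care is the coefficient bookkeeping: extracting the precise prefactor $c/2$ (rather than a $c$ times $|\alpha|+|\beta|$ term) from the correction-factor estimate is subtle, and is the reason hypothesis \eqref{equationSobolev10} must be imposed \emph{symmetrically} on both sections $\phi$ and $\psi$.
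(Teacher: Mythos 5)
Your overall strategy (triangle inequality on the numerator, correction factors $A_\phi,A_\psi$ controlled via \eqref{equationSobolev10} and the parallelism of the fibers) is the same skeleton as the paper's argument, but there is a genuine gap exactly at the point you flag as ``subtle'', and it does not close without two ingredients you have omitted. First, $\eta=\alpha\phi+\beta\psi$ is \emph{not} a section of $\pi$: since $\pi$ is linear, $\eta$ is a section of $\tfrac{1}{\alpha+\beta}\pi$, so the denominator in $Ils(\eta)(z)$ must be $d(\eta(y),(\tfrac{1}{\alpha+\beta}\pi)^{-1}(z))$, not $d(\eta(y),\pi^{-1}(z))$. Your lower bound for the denominator, $d(x,\pi^{-1}(z))\ge d(\pi^{-1}(\pi(x)),\pi^{-1}(z))$ applied to $x=\eta(y)$, gives $d(\pi^{-1}(\pi(\eta(y))),\pi^{-1}(z))$ with $\pi(\eta(y))=(\alpha+\beta)y\ne y$, so it does not produce the quantity $d(\pi^{-1}(y),\pi^{-1}(z))$ that \eqref{equationSobolev10} needs as a comparison. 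Second, even after repairing this, your bookkeeping gives $A_f\le c$ and hence a bound of the form $c\,(|\alpha|\,Ils(\phi)+|\beta|\,Ils(\psi))$ --- already a factor $2$ worse than the claim when $\alpha=\beta=1$. ``Balancing symmetrically'' cannot manufacture the missing $\tfrac12$.

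The paper recovers the constant $c/2$ by two steps that are absent from your proposal. (i) It first invokes Proposition \ref{propconvex30}(2) ($Ils(\lambda\phi)=Ils(\phi)$ for $\lambda\ne 0$) to reduce to $\eta=\phi+\psi$, eliminating the $|\alpha|,|\beta|$ dependence that would otherwise contaminate the constant. (ii) It works with the fibers of $\tfrac12\pi$ and uses Lemma \ref{lem19}, the dilation identity $d((\tfrac12\pi)^{-1}(y),(\tfrac12\pi)^{-1}(z))=2\,d(\pi^{-1}(y),\pi^{-1}(z))$, so that the chain
\begin{equation*}
d(\eta(\bar y),(\tfrac12\pi)^{-1}(z))\;\ge\; d((\tfrac12\pi)^{-1}(\bar y),(\tfrac12\pi)^{-1}(z))\;=\;2\,d(\pi^{-1}(\bar y),\pi^{-1}(z))\;\ge\;\tfrac{2}{c}\,d(f(\bar y),\pi^{-1}(z))
\end{equation*}
yields precisely $A_f\le c/2$. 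To repair your proof you should state that $Ils(\eta)$ is taken with respect to the rescaled quotient map, perform the reduction to $\alpha=\beta=1$, and insert the dilation identity; without these the argument only proves \eqref{equationLeibnitz.7} with $c$ in place of $c/2$ and only for $\alpha=\beta$.
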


We need the following lemma which is true in more general setting.
 \begin{lem}\label{lem19}
Let $X$ be a normed space, $Y$ be a topological space and $\pi: X \to Y$ be a linear and quotient map. Then
\begin{equation}\label{equation6magI}
| \lambda | d(\pi ^{-1} (y_1),\pi ^{-1} (y_2) ) =  d((1/\lambda  \pi )^{-1} (y_1), (1/\lambda  \pi )^{-1} (y_2) ), \quad \forall y_1, y_2 \in Y, \forall \lambda \in \R -\{0\}.
\end{equation}
\end{lem}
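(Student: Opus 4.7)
The plan is to reduce the claimed equality to the (routine) positive homogeneity of the norm on $X$, by first rewriting the fibers of the rescaled map $(1/\lambda)\pi$ as scalar multiples of the fibers of $\pi$.

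First I would unpack the notation: since $\pi$ is linear and $\lambda \in \R \setminus \{0\}$, the symbol $(1/\lambda)\pi$ denotes the map $X \to Y$ sending $x \mapsto (1/\lambda)\pi(x)$, which by linearity equals $\pi(x/\lambda)$. The key step is then the fiber identification
\[
(1/\lambda\,\pi)^{-1}(y) \;=\; \lambda\,\pi^{-1}(y), \qquad \forall y \in Y.
\]
This follows from the chain of equivalences: $x \in (1/\lambda\,\pi)^{-1}(y)$ iff $(1/\lambda)\pi(x) = y$ iff $\pi(x) = \lambda y$ iff (by linearity of $\pi$) $\pi(x/\lambda) = y$ iff $x/\lambda \in \pi^{-1}(y)$, i.e.\ $x \in \lambda \pi^{-1}(y)$.

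The remaining step is purely metric: for any two nonempty subsets $A, B \subset X$ and any scalar $\lambda \ne 0$, the norm satisfies $\|\lambda a - \lambda b\| = |\lambda|\,\|a-b\|$, so taking the infimum over $a \in A$, $b \in B$ yields $d(\lambda A, \lambda B) = |\lambda|\,d(A,B)$. Applying this with $A = \pi^{-1}(y_1)$ and $B = \pi^{-1}(y_2)$, and combining with the fiber identification above, gives exactly \eqref{equation6magI}.

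I do not foresee a genuine obstacle here; the statement is essentially the homogeneity of the norm transported through the linear quotient map. The only point to handle carefully is the informal notation $1/\lambda\,\pi$, which must be interpreted as the composition of $\pi$ with scalar multiplication (requiring that $Y$ carries a compatible vector space structure so that scalar multiples of elements of $Y$ make sense), and the verification that the fiber identification really is an equality of sets (both inclusions following at once from linearity).
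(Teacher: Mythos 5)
Your proof is correct and follows essentially the same route as the paper's: both arguments rest on the fiber identification $(1/\lambda\,\pi)^{-1}(y)=\lambda\,\pi^{-1}(y)$ combined with the homogeneity of the norm, $\|\lambda a-\lambda b\|=|\lambda|\,\|a-b\|$. If anything, your version is slightly cleaner, since you pass to the infimum over the fibers directly, whereas the paper's proof tacitly assumes that the set distances are attained at points $a,b$ (respectively $s,t$), which need not happen in a general normed space.
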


 \begin{proof}
 Fix $y_1, y_2 \in Y$ such that $y_1 \ne y_2.$ If $d(\pi ^{-1} (y_1),\pi ^{-1} (y_2) ) =d(a,b)$ for some $a\in \pi ^{-1} (y_1)$ and $b\in \pi ^{-1} (y_2),$ then a similar computation as in \cite[Theorem 2.10]{D22.2} we deduce that
 \begin{equation*}
\lambda  a\in (  1 / \lambda  \pi )^{-1} (y_1) \quad \mbox{and}\quad \lambda  b\in   (1/ \lambda  \pi )^{-1} (y_2),
\end{equation*}
and, consequently,
 \begin{equation}\label{equation6mag1}
d(\pi ^{-1} (y_1),\pi ^{-1} (y_2) ) = \|a-b\| = \frac 1 {|\lambda |}   \|\lambda a-\lambda b\| \geq \frac 1 {|\lambda |}  d((1/\lambda  \pi )^{-1} (y_1),(1/\lambda  \pi )^{-1} (y_2) ). 
\end{equation} 
 On the other hand, if $d((1/\lambda \pi )^{-1} (y_1), (1/\lambda  \pi )^{-1} (y_2) ) =d(s,t)$ for some $s\in (1/\lambda  \pi) ^{-1} (y_1)$ and $t\in (1/\lambda  \pi )^{-1} (y_2),$ then
 \begin{equation*}
\frac 1 \lambda  s\in  \pi ^{-1} (y_1) \quad \mbox{and}\quad \frac 1 \lambda  t\in  \pi ^{-1} (y_2),
\end{equation*}
and so
\begin{equation}\label{equation6mag2}
d((1/\lambda  \pi )^{-1} (y_1), (1/\lambda  \pi )^{-1} (y_2) ) = \|s-t\|=|\lambda | \left\| \frac 1 \lambda  s - \frac 1 \lambda  t \right\| \geq |\lambda |  d(\pi ^{-1} (y_1),\pi ^{-1} (y_2) ). 
\end{equation}
Hence, putting together \eqref{equation6mag1} and \eqref{equation6mag2}, we get \eqref{equation6magI}.
 \end{proof}

 \begin{proof}[Proof of Theorem $\ref{sommadiLipeLip.2senzaa}$]
Notice that since $Ils (\phi)=Ils (\lambda \phi)$ for any $\lambda \ne 0$ (see Proposition \ref{propconvex30} (2)), it is sufficient to prove \eqref{equationLeibnitz.7} for $\eta = \phi + \psi.$ 

We split the proof in two steps. In the first step we prove that $\eta$ is intrinsic Lipschitz and in the second one we show \eqref{equationLeibnitz.7} following similarly to \cite[Lemma 3.2]{KM16}. 

(1). The thesis follows from Theorem \ref{sommadiLipeLip.1}. 

(2). Fix $\bar y\in Y$ and $r>0.$ Suppose we are given $\varepsilon >0.$ For any $y\in B(\bar y ,r)$ we have
\begin{equation}\label{equationSobolev10.1}
\begin{aligned}
\frac{d(\phi (\bar y), \phi (y))}{  d(\phi (\bar y), \pi ^{-1} (y)) } \leq Ils (\phi)(\bar y) + \varepsilon \qquad{and} \mbox \qquad \frac{d(\psi (\bar y), \psi (y))}{  d(\psi (\bar y), \pi ^{-1} (y)) } \leq Ils (\psi)(\bar y) + \varepsilon .
\end{aligned}
\end{equation}
On the other hand, thanks to  Theorem \ref{sommadiLipeLip.1}, we can find $z\in B(\bar y, r)$ so that
\begin{equation*}
\begin{aligned}
Ils (\phi +\psi )(\bar y) \leq \frac{d(\phi (\bar y)+ \psi (\bar y), \phi (z)+\psi (z))}{  d(\phi (\bar y)+\psi (\bar y), (1/2\pi )^{-1} (z)) } + \varepsilon ,
\end{aligned}
\end{equation*}
and so 
\begin{equation*}
\begin{aligned}
 Ils (\phi +\psi )(\bar y) & \leq \frac{d(\phi (\bar y), \phi (z)) + d( \psi (\bar y), \psi (z))}{  d(\phi (\bar y)+\psi (\bar y), (1/2\pi )^{-1} (z)) } +\varepsilon \\
  & \leq \frac{d(\phi (\bar y), \phi (z)) + d( \psi (\bar y), \psi (z))}{  d((1/2\pi )^{-1} (\bar y), (1/2\pi )^{-1} (z)) } +\varepsilon \\
  & = \frac{d(\phi (\bar y), \phi (z)) + d( \psi (\bar y), \psi (z))}{2 d( \pi ^{-1} (\bar y), \pi ^{-1} (z)) }+\varepsilon \\
  & \leq \frac c 2  \frac{d(\phi (\bar y), \phi (z)) }{ d( \phi (\bar y), \pi ^{-1} (z)) } +  \frac c 2  \frac{ d( \psi (\bar y), \psi (z))}{ d( \psi (\bar y), \pi ^{-1} (z)) }+\varepsilon \\
  & \leq  \frac c 2 ( Ils (\phi)(\bar y) + \varepsilon ) +  \frac c 2  ( Ils (\psi)(\bar y) + \varepsilon )+\varepsilon
\end{aligned}
\end{equation*}
where in the first equality we used the triangle inequality, and in the second one we used the simple fact $d((1/2\pi )^{-1} (\bar y) , (1/2\pi )^{-1} (z))\leq  d(\phi (\bar y)+\psi (\bar y) , (1/2\pi )^{-1} (z)),$ noting that $\phi (\bar y)+\psi (\bar y) \in (1/2\pi )^{-1} (\bar y)$. In the first equality we used Lemma \ref{lem19} for $\lambda =2$ and in the last two inequalities we used \eqref{equationSobolev10} and \eqref{equationSobolev10.1}.

Hence, by the arbitrariness of $\varepsilon,$ the proof is complete.
\end{proof}

  \begin{prop}\label{sommadiLipeLip.2}
Under the same assumption of Theorem \ref{sommadiLipeLip.2senzaa}, denoting $\eta =  \alpha \phi +\beta  \psi $ with $\alpha , \beta \in \R -\{0\}$ the map $Y\to \R^s$  we have that
\begin{equation}\label{equationLeibnitz}
Ils_a (\eta )( y) \leq   c /2  (  Ils _a(\phi) ( y)+   Ils _a(\psi) ( y)),\quad \forall y\in Y.
\end{equation}
\end{prop}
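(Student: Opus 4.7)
The plan is to mirror the argument used in the proof of Theorem \ref{sommadiLipeLip.2senzaa}, but working directly with the two-variable limsup that defines $Ils_a$ rather than invoking a near-optimal witness point $z$. First, as in that proof, I would reduce to the case $\alpha = \beta = 1$: the definition of $Ils_a$ is built from exactly the same ratios as $Ils$, so the scaling argument based on Proposition \ref{propconvex30}(2) applies verbatim, giving $Ils_a(\lambda\phi) = Ils_a(\phi)$ for every nonzero $\lambda$. Hence it suffices to prove \eqref{equationLeibnitz} for $\eta = \phi + \psi$.

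Next, fix $\bar y \in Y$ and $\varepsilon > 0$. For every pair $z_1, z_2 \in B(\bar y,\varepsilon)$ with $z_1 \neq z_2$, the triangle inequality in $\R^s$ yields
\[
d(\eta(z_1),\eta(z_2)) \leq d(\phi(z_1),\phi(z_2)) + d(\psi(z_1),\psi(z_2)).
\]
For the denominator, since $\eta(z_1) = \phi(z_1)+\psi(z_1) \in (\tfrac{1}{2}\pi)^{-1}(z_1)$, one drops down to the distance between fibers and then applies Lemma \ref{lem19} with $\lambda = 2$:
\[
d\bigl(\eta(z_1), (\tfrac{1}{2}\pi)^{-1}(z_2)\bigr) \geq d\bigl((\tfrac{1}{2}\pi)^{-1}(z_1), (\tfrac{1}{2}\pi)^{-1}(z_2)\bigr) = 2\, d(\pi^{-1}(z_1),\pi^{-1}(z_2)).
\]
Invoking the fiber hypothesis \eqref{equationSobolev10} for both $f = \phi$ and $f = \psi$ converts the right-hand side into the two lower bounds $\frac{2}{c} d(\phi(z_1),\pi^{-1}(z_2))$ and $\frac{2}{c} d(\psi(z_1),\pi^{-1}(z_2))$, respectively. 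Splitting the numerator bound across these two denominators then gives, exactly as in Theorem \ref{sommadiLipeLip.2senzaa}, the pointwise ratio estimate
\[
\frac{d(\eta(z_1),\eta(z_2))}{d(\eta(z_1),(\tfrac{1}{2}\pi)^{-1}(z_2))} \leq \frac{c}{2}\,\frac{d(\phi(z_1),\phi(z_2))}{d(\phi(z_1),\pi^{-1}(z_2))} + \frac{c}{2}\,\frac{d(\psi(z_1),\psi(z_2))}{d(\psi(z_1),\pi^{-1}(z_2))}.
\]

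Finally, I would take $\sup_{z_1 \neq z_2 \in B(\bar y,\varepsilon)}$ of both sides, using subadditivity of the supremum, and then pass to $\inf_{\varepsilon > 0}$. The two supremums on the right are monotone nondecreasing in $\varepsilon$ and finite (because $\phi,\psi$ are intrinsically Lipschitz), so the infimum distributes across the sum and produces exactly $\frac{c}{2}Ils_a(\phi)(\bar y) + \frac{c}{2}Ils_a(\psi)(\bar y)$. Since $\bar y \in Y$ is arbitrary, this establishes \eqref{equationLeibnitz}.

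The only genuinely delicate point is the scaling reduction $Ils_a(\lambda\phi) = Ils_a(\phi)$: Proposition \ref{propconvex30}(2) is phrased for the global constant $ILS$, and one must check that it carries over to the asymptotic slope. This is, however, automatic, since under the simultaneous rescaling $\phi \mapsto \lambda\phi$ and $\pi \mapsto \lambda^{-1}\pi$ both the numerator and the denominator in the definition of $Ils_a$ acquire a common factor $|\lambda|$ (the denominator via Lemma \ref{lem19}), and this factor cancels. Apart from this bookkeeping, everything is a direct adaptation of the slope computation already performed in the proof of Theorem \ref{sommadiLipeLip.2senzaa}, with the simplification that no witness point has to be produced: the two-variable sup already appears in the definition of $Ils_a$.
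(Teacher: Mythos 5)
Your argument is correct and follows essentially the same route as the paper: the published proof of Proposition \ref{sommadiLipeLip.2} simply reruns the estimate chain of Theorem \ref{sommadiLipeLip.2senzaa} with two-variable near-optimal points $z_1,z_2\in B(\bar y,r)$, which is exactly the decomposition you use (triangle inequality in the numerator, passing to the fiber distance, Lemma \ref{lem19} with $\lambda=2$, then \eqref{equationSobolev10}), only phrased via a sup-then-inf manipulation instead of witness points. The one small slip is that the scaling identity for the denominator, $d(\lambda\phi(y_1),(1/\lambda\,\pi)^{-1}(y_2))=|\lambda|\,d(\phi(y_1),\pi^{-1}(y_2))$, is Lemma \ref{lem22a} rather than Lemma \ref{lem19} (which concerns distances between two fibers), but this does not affect the validity of your reduction to $\alpha=\beta=1$.
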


\begin{proof}
The proof is equal to Theorem \ref{sommadiLipeLip.2senzaa}, noting that for any $z_1,z_2\in B(\bar y ,r)$ we have
\begin{equation*}
\begin{aligned}
\frac{d(\phi (z_1), \phi (z_2))}{  d(\phi (z_1), \pi ^{-1} (z_2)) } \leq Ils _a(\phi)(\bar y) + \varepsilon 
\end{aligned}
\end{equation*} and we can find $z_1, z_2\in B(\bar y, r)$ so that
\begin{equation*}
\begin{aligned}
Ils_a (\phi +\psi )(\bar y) \leq \frac{d(\phi (z_1)+ \psi (z_1), \phi (z_2)+\psi (z_2))}{  d(\phi (z_1)+\psi (z_1), (1/2\pi )^{-1} (z_2)) } + \varepsilon .
\end{aligned}
\end{equation*}

\end{proof}

  \begin{rem}
   Let $X$ be a normed space, $Y$ be a topological space and $\pi: X \to Y$ be a linear and quotient map. If $\phi , \psi :Y \to X$ are intrinsic Lipschitz sections satisfying \eqref{equationSobolev10}, then 
      \begin{equation*}
\begin{aligned}
& d( \alpha \phi(y) + \beta \psi (y), (1/\lambda \pi) ^{-1} (z))  \\
& \qquad \qquad   \leq 2c\,   \max\{|\alpha |d((1/\alpha   \pi )^{-1} (y), (1/\alpha  \pi )^{-1} (z) ) , |\beta |d((1/\beta   \pi )^{-1} (y), (1/\beta \pi )^{-1} (z) )\},\\
\end{aligned}
\end{equation*}
for every $y,z \in Y$ with $\alpha , \beta \in \R- \{ 0 \}$ such that $\alpha + \beta =\lambda.$  
   In particular, when $\alpha =\beta = 1/2,$ we have
   \begin{equation*}
\begin{aligned}
d( \phi(y) +\psi (y), (1/2\pi) ^{-1} (z)) &   \leq 2c\,   d((1/2  \pi )^{-1} (y), (1/2 \pi )^{-1} (z) ),\\
\end{aligned}
\end{equation*}
for every $y,z \in Y.$

Indeed, fix $y,z \in Y$ such that $y\ne z$ and let $a \in (1/\alpha \pi) ^{-1} (z)$ and $b \in (1/\beta \pi )^{-1} (z)$ such that $d(\alpha \phi(y), (1/\alpha \pi )^{-1} (z)) = d( \alpha \phi(y), a)$ and $d(\beta  \psi(y), (1/\beta \pi )^{-1} (z)) =d( \beta \psi(y), b).$ It is easy to see that $a+b \in (1/\lambda \pi) ^{-1} (z)$ and so
 \begin{equation*}
\begin{aligned}
& d( \alpha \phi(y) + \beta \psi (y), (1/\lambda \pi) ^{-1} (z))\\ & \quad  \leq \|\alpha \phi(y) -a\| + \|\beta \psi (y)-b \| \\
& \quad \leq  2 \max\{ d(\alpha  \phi(y), (1/\alpha \pi )^{-1} (z)) ,d( \beta \psi(y),(1/\beta  \pi )^{-1} (z)) \}\\
&  \quad \leq 2 c\, \max\{|\alpha |d((1/\alpha   \pi )^{-1} (y), (1/\alpha  \pi )^{-1} (z) ) , |\beta |d((1/\beta   \pi )^{-1} (y), (1/\beta \pi )^{-1} (z) )\},\\
\end{aligned}
\end{equation*}
Notice that in the last inequality we used the fact
  \begin{equation*}
\begin{aligned}
 d((1/\alpha \pi )^{-1}(y), (1/\alpha \pi )^{-1} (z)) & =|\alpha| d (\pi ^{-1} (y) , \pi ^{-1} (z)) \geq \frac {|\alpha |}  c d(\phi (y), \pi ^{-1} (z))\\ &  = \frac 1 {c}  d(\alpha  \phi(y), (1/\alpha \pi )^{-1} (z)),
\end{aligned}
\end{equation*}
where in the first equality we used Lemma \ref{lem19} and in the first inequality we used \eqref{equationSobolev10}.
\end{rem}

We conclude this section noting that the set of all sections satisfying the condition \eqref{equationSobolev10} is convex.

   \begin{prop}\label{lem22b} Let $Y$ be a metric space and $\pi: \R^s \to Y$ be a linear and quotient map. Then the set of all intrinsically $L$-Lipschitz sections $\phi $ and $ \psi $ of $\pi$  satisfying  \eqref{equationSobolev10} for some $c\geq 1$ is a convex set.
     \end{prop}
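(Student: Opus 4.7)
The plan is to fix $t\in[0,1]$ and two sections $\phi,\psi$ that are intrinsically $L$-Lipschitz and both satisfy \eqref{equationSobolev10} with some constant $c\geq 1$, and then check that $\eta:=t\phi+(1-t)\psi$ is again a section of $\pi$ belonging to the same class (possibly with a slightly deteriorated Lipschitz constant). Linearity of $\pi$ gives $\pi\circ\eta = t\,\mathrm{id}_Y+(1-t)\,\mathrm{id}_Y = \mathrm{id}_Y$ for free, so the real work is in the two quantitative bounds.

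The single observation that drives both estimates is that, because $\pi$ is linear, each fiber $\pi^{-1}(z)\subset\R^s$ is an affine subspace, hence convex: if $a_1,a_2\in\pi^{-1}(z)$ then $ta_1+(1-t)a_2\in\pi^{-1}(z)$. Combining this with the triangle inequality in $\R^s$ gives
\[
\|\eta(y)-(ta_1+(1-t)a_2)\| \leq t\|\phi(y)-a_1\|+(1-t)\|\psi(y)-a_2\|,
\]
and taking infima separately over $a_1,a_2\in\pi^{-1}(z)$ yields the key bound
\[
d(\eta(y),\pi^{-1}(z)) \leq t\,d(\phi(y),\pi^{-1}(z)) + (1-t)\,d(\psi(y),\pi^{-1}(z)).
\]
Plugging \eqref{equationSobolev10} for $\phi$ and $\psi$ into the right-hand side immediately gives the same inequality for $\eta$ with the same constant $c$.

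For the intrinsically Lipschitz estimate I would again use the triangle inequality in $\R^s$:
\begin{align*}
\|\eta(y_1)-\eta(y_2)\| &\leq t\|\phi(y_1)-\phi(y_2)\|+(1-t)\|\psi(y_1)-\psi(y_2)\| \\
&\leq L\bigl(t\,d(\phi(y_1),\pi^{-1}(y_2))+(1-t)\,d(\psi(y_1),\pi^{-1}(y_2))\bigr).
\end{align*}
I would then estimate each of the two inner distances by $c\,d(\pi^{-1}(y_1),\pi^{-1}(y_2))$ via \eqref{equationSobolev10} and use the trivial bound $d(\pi^{-1}(y_1),\pi^{-1}(y_2))\leq d(\eta(y_1),\pi^{-1}(y_2))$, which holds because $\eta(y_1)\in\pi^{-1}(y_1)$. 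This produces
\[
d(\eta(y_1),\eta(y_2)) \leq Lc\,d(\eta(y_1),\pi^{-1}(y_2)),
\]
so $\eta$ remains intrinsically Lipschitz, with constant at most $Lc$.

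I do not expect any genuine obstacle: the argument reduces entirely to convexity of the fibers and two applications of the triangle inequality. The only mildly delicate point is that the intrinsic Lipschitz constant deteriorates from $L$ to $Lc$; this is exactly why the statement reads ``for some $c\geq 1$'' and does not pin down the Lipschitz bound in advance, so that $\eta$ still lies in the class under consideration and convexity is preserved.
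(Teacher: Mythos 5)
Your proof is correct and follows essentially the same route as the paper: the central estimate $d(\eta(y),\pi^{-1}(z))\le t\,d(\phi(y),\pi^{-1}(z))+(1-t)\,d(\psi(y),\pi^{-1}(z))$, which you obtain directly from convexity of the affine fibers, is exactly what the paper derives by choosing nearest points in the rescaled fibers $(1/t\,\pi)^{-1}(z)$ and $(1/(1-t)\,\pi)^{-1}(z)$ and invoking Lemma \ref{lem22a}, after which \eqref{equationSobolev10} is applied to $\phi$ and $\psi$ in the same way. Your additional direct verification that $\eta$ is intrinsically Lipschitz (with constant $Lc$) is a step the paper's proof omits, leaning implicitly on Theorem \ref{sommadiLipeLip.1}; just note that the phrase ``for some $c\ge 1$'' in the statement qualifies the constant in \eqref{equationSobolev10} rather than the Lipschitz constant, so your closing remark slightly misattributes where the slack in the definition of the class comes from, though this does not affect the validity of the argument.
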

     
     We need the following lemma.
   \begin{lem}\label{lem22a}
   Let $X$ be a normed space, $Y$ be a topological space and $\pi: X \to Y$ be a linear and quotient map. Then
\begin{equation}\label{equ22may.1}
| \lambda | d(\phi (y_1),\pi ^{-1} (y_2) ) =  d(\lambda  \phi  (y_1), (1/\lambda  \pi )^{-1} (y_2) ), \quad \forall y_1, y_2 \in Y, \forall \lambda \in \R -\{0\}.
\end{equation}
   \end{lem}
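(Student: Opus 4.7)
The plan is to mimic the strategy of Lemma \ref{lem19}, but made simpler by the fact that one side of the distance (the point $\phi(y_1)$) is already fixed, so only a single scaling bijection is needed instead of two.

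The key observation, already exploited in the proof of Lemma \ref{lem19}, is that linearity of $\pi$ gives the set-theoretic identity
\begin{equation*}
(1/\lambda\, \pi)^{-1}(y) = \lambda\, \pi^{-1}(y) := \{\lambda x : x \in \pi^{-1}(y)\}, \quad \forall y \in Y, \, \forall \lambda \in \R-\{0\}.
\end{equation*}
Indeed, if $b \in \pi^{-1}(y)$ then $\pi(b) = y$, hence $(1/\lambda\,\pi)(\lambda b) = (1/\lambda)\pi(\lambda b) = (1/\lambda)\cdot \lambda\, \pi(b) = y$, so $\lambda b \in (1/\lambda\,\pi)^{-1}(y)$; conversely, if $c \in (1/\lambda\,\pi)^{-1}(y)$, then the element $b = c/\lambda$ satisfies $\pi(b) = (1/\lambda)\pi(c) = (1/\lambda\,\pi)(c) = y$, so $c = \lambda b$ with $b \in \pi^{-1}(y)$. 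Thus the map $b \mapsto \lambda b$ is a bijection between $\pi^{-1}(y_2)$ and $(1/\lambda\,\pi)^{-1}(y_2)$.

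Using this bijection together with the homogeneity of the norm on $X$, I would compute directly:
\begin{equation*}
\begin{aligned}
d\bigl(\lambda\phi(y_1), (1/\lambda\,\pi)^{-1}(y_2)\bigr) &= \inf_{c \,\in\, (1/\lambda\,\pi)^{-1}(y_2)} \|\lambda\phi(y_1) - c\| \\
&= \inf_{b \,\in\, \pi^{-1}(y_2)} \|\lambda\phi(y_1) - \lambda b\| \\
&= |\lambda| \inf_{b \,\in\, \pi^{-1}(y_2)} \|\phi(y_1) - b\| \\
&= |\lambda|\, d\bigl(\phi(y_1), \pi^{-1}(y_2)\bigr),
\end{aligned}
\end{equation*}
which is exactly \eqref{equ22may.1}.

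There is no real obstacle here: unlike in Lemma \ref{lem19}, I do not need to argue via attained minimizers and then balance two opposite inequalities, because infima transform cleanly under bijective reparametrisation combined with norm homogeneity. The only thing to double-check is that the set identity $(1/\lambda\,\pi)^{-1}(y_2) = \lambda\,\pi^{-1}(y_2)$ is used consistently, which is precisely the content of the ``similar computation as in \cite[Theorem 2.10]{D22.2}'' invoked inside the proof of Lemma \ref{lem19}.
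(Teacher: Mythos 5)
Your proof is correct, and it takes a genuinely different (and in fact more robust) route than the paper's. The paper argues by picking points $a\in\pi^{-1}(y_2)$ and $s\in(1/\lambda\,\pi)^{-1}(y_2)$ at which the two distances are \emph{attained}, scaling them by $\lambda$ and $1/\lambda$ respectively, and combining the two resulting opposite inequalities. You instead establish the set identity $(1/\lambda\,\pi)^{-1}(y_2)=\lambda\,\pi^{-1}(y_2)$ once and for all and push the infimum through the bijection $b\mapsto\lambda b$ using homogeneity of the norm. What your version buys is that it avoids the attainment assumption entirely: in a general normed space $X$ (which is the stated hypothesis of the lemma) a closed fiber need not contain a nearest point to $\phi(y_1)$, so the paper's ``let $a$ be such that $d(\phi(y_1),\pi^{-1}(y_2))=d(\phi(y_1),a)$'' is not justified as written, whereas your reparametrisation of the infimum is valid unconditionally. (The paper's argument can be repaired with $\varepsilon$-approximate minimizers, but your direct computation makes that unnecessary.) The same remark applies to the paper's proof of Lemma \ref{lem19}, which has the identical attainment issue.
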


 \begin{proof}
 Fix $y_1, y_2 \in Y,$ and $ \lambda \in \R -\{0\}.$ We consider $a\in \pi ^{-1} (y_2)$ such that $ d(\phi (y_1),\pi ^{-1} (y_2) ) = d(\phi (y_1),a ).$ By $\lambda a \in  (1/\lambda  \pi )^{-1}(y_2)$ we deduce that
 \begin{equation*}
\begin{aligned}
 d(\phi (y_1),\pi ^{-1} (y_2) ) & =\|\phi (y_1)-a\|=\frac 1 {|\lambda |} \|\lambda \phi (y_1)-\lambda a\| \geq \frac 1 {|\lambda |}  d(\lambda  \phi  (y_1), (1/\lambda  \pi )^{-1} (y_2) ).
\end{aligned}
\end{equation*}
Moreover, if we consider $s \in (1/\lambda  \pi )^{-1} (y_2) $ such that $ d(\lambda  \phi  (y_1), (1/\lambda  \pi )^{-1} (y_2) )= d(\lambda  \phi  (y_1), s)$ and recall that $1/\lambda s \in  \pi ^{-1} (y_2),$ then
  \begin{equation*}
 d(\lambda  \phi  (y_1), (1/\lambda  \pi )^{-1} (y_2) ) = \|\lambda  \phi  (y_1) - s \|= |\lambda |  \|  \phi  (y_1) - 1/\lambda s \| \geq  |\lambda |  d(\phi (y_1),\pi ^{-1} (y_2) ).
\end{equation*}
Putting together the last two inequalities, we get \eqref{equ22may.1}.
   \end{proof}

\begin{proof}[Proof of Proposition \ref{lem22b}] Fix $t\in [0,1]$ and $\phi , \psi.$ We want to prove that the section $t \phi +(1-t) \psi $ satisfies \eqref{equationSobolev10}, i.e., 
  \begin{equation*}
c\, d(\pi^{-1} (y),\pi ^{-1} (z) ) \geq  d \left(t \phi (y)+(1-t) \psi   (y), \pi ^{-1} (z) \right), \quad \, \forall y,z \in Y.
\end{equation*}
Fix $y,z \in Y$ such that $y\ne z$ and let $a  \in (1/t\pi )^{-1} (z)$ and $ b  \in (1/(1-t)\pi )^{-1} (z)$ such that $d \left(t \phi (y), (1/t\pi )^{-1} (z) \right) =d \left(t \phi (y), a \right) $ and $d \left((1-t) \psi (y), (1/(1-t)\pi )^{-1} (z) \right) = d \left((1-t)\psi (y), b \right).$ Notice that $a+b \in \pi ^{-1} (z),$ we have that
 \begin{equation*}
\begin{aligned}
 d \left(t \phi (y)+ (1-t) \psi   (y), \pi ^{-1} (z) \right) &  \leq  \left\| t \phi (y) -  a \right\|+ \left\| (1-t) \psi   (y)- b \right\|\\
 & = d \left(t \phi (y), (1/t\pi )^{-1} (z) \right) +  d \left((1-t) \psi   (y),(1/(1-t) \pi )^{-1} (z) \right) \\
 & = t  d(\phi (y),\pi ^{-1} (z) ) +(1-t)  d(\psi (y),\pi ^{-1} (z) ) \\
 & \leq c d(\pi ^{-1} (y),\pi ^{-1} (z) ),
\end{aligned}
\end{equation*}
where in the second equality we used Lemma \ref{lem22a} and in the last inequality we used \eqref{equationSobolev10}.
   \end{proof}

\subsection{Applications to groups}
In this section, we will apply the theory developed in Section \ref{Sum of intrinsically Lipschitz sections is so too} to the case of groups.  
The general setting is a group $G$ together with a closed subgroup $H$ of $G$ in such a way that the quotient space  $G/H:=\{gH:g\in G\}$ naturally is a topological space for which the  map $\pi:g\mapsto gH$ is a quotient map.

A section for the map $\pi: G\to G/H$ is just a map $\phi: G/H \to G$ such that $\phi(gH)\in gH$, since we point out the trivial identity $\pi^{-1}(gH)= \pi^{-1}(nH)=nH$.  To have the notion of intrinsic Lipschitz section we need the group $G$ to be equipped with a distance which we assume left-invariant. We refer to such a $G$ as a {\em metric group}. When $H$ is $1$-dimensional we get the following result.

 \begin{coroll}\label{sommadiLipeLip.Carnot} 
 Let  $G= N \rtimes H$ be a metric group with $H$ one-dimensional and $\pi :G \to G/H$ linear and Lipschitz map. Then, the sum of two intrinsically Lipschitz maps in the FSSC sense is so too. 
\end{coroll}

  \begin{proof} It is enough to consider Proposition 1.6 in \cite{DDLD21} and Theorem \ref{sommadiLipeLip.1}. 
\end{proof}

   \section{Intrinsic Cheeger energy of a metric measure space}\label{Cheeger energy of a metric measure space} 
  In this section we use the same notation of the last one, except for the constant $c$ which appearing in \eqref{equationSobolev10}. Here $$ c\geq 2.$$ This assumption guarantees us that the constant appearing in Proposition \ref{sommadiLipeLip.2} and so in Leibniz formula is greater  or equal to  one.  Here we use $L^q(Y, \mm)$ and $L^q(Y, \R^s, \mm)$ in order to consider the maps $Y \to \R$ and $Y \to \R^s,$ respectively. 

\subsection{The spaces $L^q(Y, \R^s, \mm)$} It is no usual to consider the space  $L^q(Y, \R^s, \mm);$ however, here it is fundamental because, in our intrinsic context, if $X=\R$ then $Y\subset \R$ and so we get the trivial case since $\pi$ is also injective. Hence, we must study the case $X=\R^s$ with $s>1.$ Because of this, we define the space $L^q(Y, \R^s, \mm)$ for $q\in [1, + \infty).$ In a natural way,  we ask that every component of a map $\psi =(\psi_1, \dots, \psi _s): Y \to \R^s$ belongs to  $L^q(Y,  \mm)$ (see,  for instance, \cite{B10}). More precisely,
\begin{defi}
Let  $\pi:\R^s \to Y$ be  a quotient map. We define the set
\begin{equation*}
\begin{aligned}
L^q(Y, \R^s, \mm) & = \{ \psi =(\psi_1, \dots, \psi _s) :Y\to \R^s \,:\,  \psi \mbox{ is a section of } \pi \\
&\quad \quad   \mbox{ and } \psi_i \in  L^q(Y,  \mm) \mbox{ for any } i=1,\dots , s\},
\end{aligned}
\end{equation*}
and we endow it with the following norm
\begin{equation*}
\|\psi\|_q =\sum_{i=1}^s |\psi_i|_q,
\end{equation*}
where $|.|_q$ is the usual norm in $L^q(Y, \mm).$ 
\end{defi}

We give a short proof regarding the fact that $\|.\|_q$  is a norm.
\begin{enumerate}
\item $\|\psi\|_q\geq 0$ by definition of  $|.|_q;$
\item $\|\psi\|_q= 0$ if and only if $|\psi _i|_q =0$ for every $i=1,\dots , s$ and so iff $\psi \equiv 0;$
\item By $| \lambda \psi _i|_q = | \lambda  | | \psi _i|_q$ for any $\lambda \in \R,$ we have that $\|\lambda \psi\|_q  =|\lambda |\|\psi\|_q $  for any $\lambda \in \R;$
\item  For any $\psi , \eta \in L^q(Y, \R^s, \mm),$ we get
\begin{equation*}
\|\psi +\eta \|_q \leq  | \psi _1|_q +|\eta _1|_q +\dots +  | \psi _s|_q+| \eta _s|_q = \|\psi  \|_q + \| \eta \|_q,
\end{equation*}
i.e.,  the triangle inequality holds.
\end{enumerate}

Notice that there is possible to define other norms in  $L^q(Y, \R^s, \mm)$ as 
\begin{enumerate}
\item $\|\psi \|'_q: =\max _{i=1,\dots , s} |\psi _i|_q;$
\item $\|\psi \|''_q:= \sqrt{\sum_{i=1}^s |\psi_i|_q^2}.$
\end{enumerate}
 However, in this paper, we use just the convergence property of this class of maps and so we give the following definition.
\begin{defi}
Let  $(\psi_{n})_{n\in \N} :=(\psi_{1,n}, \dots , \psi_{s,n})_{n\in \N} \subset L^q(Y, \R^s, \mm)$ and $\psi :=(\psi _1,\dots,  \psi _s) \in L^q(Y, \R^s,  \mm).$ We define the convergence of the sequence $(\psi_{n})_{n\in \N}$ on $L^q(Y, \R^s, \mm)$ as the convergence of every component $\psi_{i,n}$, i.e.,
\begin{equation*}
\psi_{i,n} \to \psi_i, \quad \mbox{ in } L^q(Y, \mm),
\end{equation*}
for any $i=1,\dots , s.$ We denote it as
\begin{equation*}
\psi _n \to \psi  \,\, \mbox{ in } L^q(Y, \R^s, \mm). 
\end{equation*}
\end{defi}
Actually,  the convergence  is up to a no zero constant but this is not important because  it appears when we use a $non$ $usual$ Leibniz formula  and  the intrinsically Lipschitz constants of $\psi$ and of $\alpha \psi$ (with $\alpha \ne 0$) are the same and so not depend on $\alpha$ (see Proposition \ref{propconvex30}   (2)).


\subsection{Intrinsic relaxed slope}

  \begin{defi}[Intrinsic relaxed slope]\label{defIntrinsically relaxed slope} Let $q\in (1,+\infty)$ and $\phi \in L^q(Y,\R^s, \mm)$  be a section of   $\pi :\R^s \to Y .$ We say that a non negative map $G \in L^q(Y, \mm)$ is an intrinsically relaxed slope of $\phi$ if there are $H_1, H_2 \in L^q(Y, \mm)$ such that
  \begin{enumerate}
\item $0\leq H_1(y) \leq  G(y)$ for a.e. $y\in Y.$ 
\item  There is a sequence $ (\phi _h)_h \subset \LL $ such that
\begin{equation*}
\phi _h \to \phi  \,\, \mbox{ in } L^q(Y, \R^s, \mm) \quad \mbox{ and } \quad  Ils_a ( \phi_h) (.) \rightharpoonup  H_2 \,\, \mbox{ in } L^q(Y, \mm),
\end{equation*}
\item $0\leq H_1(y) \leq H_2(y)$ for a.e. $y\in Y.$
\end{enumerate}
 
\end{defi}

Note that, by definition, $H_2$ is also an  intrinsic relaxed slope of $\phi $ with the same $H_1$ of $G$ and itself. 

The main difference with respect to the usual definition of  relaxed slope is that we have $H_1\ne H_2.$ This is because our Leibniz formula is non linear.

  \begin{defi}[Set of intrinsic $q$-relaxed slopes]\label{defIntriRquadro} Let $q\in (1,+\infty)$ and $\phi \in L^q(Y, \mm)$  be a section of   $\pi .$ We denote by $R^q(\phi)$ the set:
\begin{equation*}
R^q(\phi) := \{ G \in L^q(Y, \mm) \, :\, G(\cdot ) \mbox{ is an intrinsic relaxed slope of $\phi$}\}.
\end{equation*}
\end{defi}

An important point is that there is always a minimal element inside the class $R^q(\phi)$ even in a pointwise sense, when it is no empty. 
       
  \begin{prop}[Locality and Minimality]\label{Locality and Minimality} Let $q\in (1,+\infty)$ and $\phi \in L^q(Y,\R^s, \mm)$  be a section of   $\pi .$  Then, the following properties hold:
  \begin{enumerate}
\item $R^q(\phi)$ is closed and convex subset of $L^q(Y, \mm).$
\item If $G_1, G_2 \in R^q(\phi)$, then $\min\{ G_1, G_2\} \in R^q(\phi).$
\end{enumerate}
As a consequence, the $L^q(Y, \mm)$-minimal element in the class $R^q(\phi)$ is well defined and we denote by $|D\phi|_q.$ Moreover,
\begin{equation*}
|D\phi|_q \leq G , \quad \mm-a.e. \mbox{ in } Y, \quad \forall G \in R^q(\phi ).
\end{equation*}
\end{prop}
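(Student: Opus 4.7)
The plan is to follow the classical Ambrosio--Gigli--Savar\'e template adapted in \cite[Lemma 4.3]{ACDM15} and \cite[Lemma 2.2.4]{P17}. The three structural properties to establish are (a) closedness of $R^q(\phi)$ in $L^q(Y,\mm)$, (b) convexity, and (c) stability under pointwise minimum; combined with the uniform convexity of $L^q$ for $q\in(1,\infty)$, these deliver both the existence of the $L^q$-norm minimizer and its identification with the pointwise almost-everywhere minimum.

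For closedness, I would pick $(G_n)\subset R^q(\phi)$ with $G_n\to G$ strongly in $L^q(Y,\mm)$ and fix associated data $(H_1^{(n)},H_2^{(n)},(\phi_h^{(n)})_h)$ from Definition \ref{defIntrinsically relaxed slope}. Since $0\le H_1^{(n)}\le G_n$, the sequence $(H_1^{(n)})_n$ is $L^q$-bounded; weak convergence of each $(Ils_a(\phi_h^{(n)}))_h$ also bounds $(H_2^{(n)})_n$ in $L^q$. A diagonal extraction produces $h_n\to\infty$ such that $\phi_{h_n}^{(n)}\to\phi$ strongly in $L^q(Y,\R^s,\mm)$ and such that $Ils_a(\phi_{h_n}^{(n)})$ is weakly close to $H_2^{(n)}$ against a countable dense family of $L^{q'}$-test functions; by reflexivity of $L^q$, after extracting a further subsequence one gets $H_1^{(n)}\rightharpoonup H_1$ and $Ils_a(\phi_{h_n}^{(n)})\rightharpoonup H_2$ weakly in $L^q$. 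Passage to the weak limit in the inequalities, using that the non-negative cone of $L^q$ is weakly closed, yields $0\le H_1\le G$ and $0\le H_1\le H_2$; hence $G\in R^q(\phi)$.

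For convexity, given $G_1,G_2\in R^q(\phi)$ with data $(H_1^{(j)},H_2^{(j)},(\phi_h^{(j)})_h)_{j=1,2}$ and $\theta\in[0,1]$, I would take the combined sequence $\phi_h:=\theta\phi_h^{(1)}+(1-\theta)\phi_h^{(2)}$, which converges to $\phi$ in $L^q$ by linearity, and apply the intrinsic Leibniz formula (Proposition \ref{sommadiLipeLip.2}) to obtain the pointwise bound $Ils_a(\phi_h)\le (c/2)\bigl(Ils_a(\phi_h^{(1)})+Ils_a(\phi_h^{(2)})\bigr)$; the standing assumption $c\ge 2$ of this section guarantees we are in the relevant regime. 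Passing to a weakly convergent subsequence of $Ils_a(\phi_h)$ and taking $H_1:=\theta H_1^{(1)}+(1-\theta)H_1^{(2)}$ assembles admissible data for $\theta G_1+(1-\theta)G_2$. For the min-property I would localize on the measurable sets $A:=\{G_1\le G_2\}$ and $A^c$, gluing $(\phi_h^{(1)})$ on $A$ with $(\phi_h^{(2)})$ on $A^c$ through smooth cutoffs approximating $\chi_A$, and then invoke closedness to let the cutoff scale vanish.

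Finally, $R^q(\phi)$ being a closed convex subset of the uniformly convex Banach space $L^q(Y,\mm)$, there is a unique element of minimal $L^q$-norm, which I name $|D\phi|_q$. For any $G\in R^q(\phi)$, the min-property yields $\min(|D\phi|_q,G)\in R^q(\phi)$ with $L^q$-norm no larger than $\||D\phi|_q\|_q$, and uniqueness of the minimizer forces $\min(|D\phi|_q,G)=|D\phi|_q$ $\mm$-a.e., which is exactly the pointwise inequality $|D\phi|_q\le G$. The main obstacle is the nonlinearity of the intrinsic Leibniz formula: the factor $c/2$ in Proposition \ref{sommadiLipeLip.2} blocks a direct linear-combination argument for weak limits of $Ils_a$, and the careful bookkeeping with the auxiliary function $H_1$ in Definition \ref{defIntrinsically relaxed slope} is what absorbs this defect and keeps the AGS machinery operative.
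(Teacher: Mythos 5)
Your treatment of closedness (diagonal extraction plus reflexivity and weak closedness of the order relations) and your final deduction of the pointwise minimality of $|D\phi|_q$ from the min-stability coincide with the paper's argument. However, two of your steps diverge in ways that leave genuine gaps. First, in the convexity step you take $H_2$ to be the weak limit of $Ils_a(\theta\phi_h^{(1)}+(1-\theta)\phi_h^{(2)})$ along a subsequence and $H_1:=\theta H_1^{(1)}+(1-\theta)H_1^{(2)}$, but you never verify condition (3) of Definition \ref{defIntrinsically relaxed slope}, namely $H_1\le H_2$ a.e.; the Leibniz bound only controls your $H_2$ from \emph{above} by $\tfrac{c}{2}\bigl(H_2^{(1)}+H_2^{(2)}\bigr)$, which goes in the wrong direction. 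The paper instead \emph{defines} $\tilde H_2:=\tfrac{c}{2}(H_2+F_2)$ as the second auxiliary function, so that the standing assumption $c\ge 2$ gives $\tilde H_1\le H_1+F_1\le H_2+F_2\le \tilde H_2$ directly; this is exactly where the hypothesis $c\ge 2$ enters, and your proposal does not use it for this purpose.

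Second, and more seriously, your min-property argument via smooth cutoffs approximating $\chi_{\{G_1\le G_2\}}$ is the classical AGS locality route, but it is not available in this intrinsic framework: there is no Leibniz or product rule controlling $Ils_a$ of a section multiplied by a scalar cutoff (Proposition \ref{propslope.20may} concerns products of two \emph{sections} and needs the extra hypothesis \eqref{equation20maggio}, and Proposition \ref{sommadiLipeLip.2} only covers \emph{constant}-coefficient combinations), so the estimate you would need before sending the cutoff scale to zero cannot be produced. The paper avoids this entirely by exploiting the weakness of Definition \ref{defIntrinsically relaxed slope}: it glues only the measurable auxiliary function, setting $\tilde H_1=H_1$ on $\{G_1\le G_2\}$ and $\tilde H_1=F_1$ on the complement (which immediately gives $\tilde H_1\le\min\{G_1,G_2\}$), while keeping a single global convex combination $\tfrac12\phi_h+\tfrac12\psi_h$ as the approximating sequence with $\tilde H_2=\tfrac{c}{2}(H_2+F_2)$ as before. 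You should replace the cutoff construction with this measurable gluing of $H_1$ alone.
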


    \begin{proof} 
    Proposition \ref{sommadiLipeLip.2} and \ref{propconvex30}   easily imply that $R^q(\phi)$ is a closed and convex family of sections. Indeed, let $\{G_h\} _h \subset R^q(\phi)$ such that $G_h \to G$  in $L^q(Y, \mm).$ In order to prove the closed property, we want to show that $G \in R^q(\phi).$ By definition, we are able to find $(H^1_h) _h, (H^2_h) _h  \subset L^q(Y, \mm)$ such that
      \begin{enumerate}
\item $0\leq H^1_h(y) \leq G_h(y)$ for a.e. $y\in Y.$
\item There is a sequence $(\phi_{h,n})_{h,n} \subset \LL $ such that
\begin{equation*}
\phi _{h,n} \to_{n} \phi  \,\, \mbox{ in } L^q(Y,\R^s, \mm) \quad \mbox{ and } \quad Ils_a (\phi_{h,n}) (\cdot) \rightharpoonup _n H^2 _h\,\, \mbox{ in } L^q(Y, \mm).
\end{equation*}
\item $1\leq H^1_h(y) \leq  H^2_h(y),$  for a.e. $y\in Y.$ 
\end{enumerate}
Up to a subsequence, using the reflexivity of $L^q(Y, \mm)$ (see,  \cite[Theorem 4.10]{B10}), we can suppose $H^i_h \rightharpoonup H^i\in L^q(Y, \mm)$ with $i=1,2;$ then using a diagonal argument we find:
\begin{equation*}
\phi _{h,n(h)} \to_{h} \phi \,\, \mbox{ in } L^q(Y,\R^s, \mm) \quad \mbox{ and } \quad Ils_a (\phi_{h,n(h)}) (\cdot) \rightharpoonup _h H^2 \,\, \mbox{ in } L^q(Y, \mm).
\end{equation*}
On the other hand, $H^1(y) \leq G(y)$ and $H^1(y) \leq  H^2(y)$ for a.e. $y\in Y.$  This means that $G \in R^q(\phi),$ as desired.

Regarding the convexity of $R^q(\phi),$ let $G_1, G_2  \in R^q(\phi).$ We would like to show 
\begin{equation*}
tG_1 + (1-t) G_2 \in R^q(\phi) \quad \forall t\in [0,1].
\end{equation*}
Fix $t\in [0,1].$ By definition, we have $H_1 , H_2,  (\phi _h)_h$ for $G_1$ and $ F_1 , F_2,  (\psi _h)_h$ for $G_2$ as in Definition \ref{defIntrinsically relaxed slope}.

Hence,  it easy to see that $$\tilde H_1:= tH_1 + (1-t) F_1$$ and so the first point holds. Regarding the second point in Definition \ref{defIntrinsically relaxed slope}, we set 
\begin{equation*}
\tilde \phi _h  = t \phi _h +(1-t)  \psi _h \qquad \mbox{and} \qquad   \tilde H_2 =\frac c 2 ( H_2+F_2).
\end{equation*}
It is trivial that  $\tilde \phi _h \to \phi $ and, by Proposition \ref{sommadiLipeLip.2} and \ref{propconvex30} (2), $ Ils _a(\tilde \phi_{h})  \rightharpoonup \tilde H^2$ in $L^q(Y , \mm).$ Moreover, by Lemma \ref{lem22b}, $(\tilde \phi _h)_h \subset \LL.$


 Finally, by definition of $\tilde H_1, \tilde H_2$  and recall that $c\geq 2,$ we get the last point and, as a consequence, we deduce the convexity of $R^q(\phi).$

$\quad $

Now we show the second point of this statement. Let $G_1, G_2  \in R^q(\phi).$ We want to show that $ \min \{ G_1(y) , G_2 (y)\}  \in R^q(\phi).$ We consider $H_1 , H_2,  (\phi _h)_h$ for $G_1$ and $ F_1 , F_2,  (\psi _h)_h$ for $G_2$ as in Definition \ref{defIntrinsically relaxed slope}. Here, we choose 
\begin{equation*}\label{int}
\tilde H_1 (y)=\left\{ 
\begin{array}{lcl}
H_1(y)  &   & \mbox{ for $y\in Y$ such that } \min \{ G_1(y) , G_2 (y)\} = G_1(y), \\
F_1 (y) &    & \mbox{ for $y\in Y$ such that } \min \{ G_1(y) , G_2(y) \} = G_2(y). \\
\end{array}
\right.
\end{equation*}
and so the first point in Definition \ref{defIntrinsically relaxed slope} holds. Moreover,  set 
\begin{equation*}
\tilde \phi _h  = \frac 1 2 \phi _h +\frac 1 2  \psi _h \qquad \mbox{and} \qquad   \tilde H_2 =\frac c 2 ( H_2+F_2).
\end{equation*}
By Lemma \ref{lem22b}, $(\tilde \phi _h)_h \subset \LL $ and using Proposition \ref{sommadiLipeLip.2} and \ref{propconvex30} (2)
 we get the last two points in Definition \ref{defIntrinsically relaxed slope}.

Let's now prove the last part of the statement: since $R^q(\phi)$ is a closed and convex subset of $L^q(Y,\mm)$, it turns out to be weakly closed. Therefore weak lower semicontinuity and coercivity of the $L^q(Y,\mm)$ norm imply the existence of a minimal element in the weakly closed subset $R^q(\phi) \subset L^q(Y, \mm)$. Note that the pointwise minimality condition follows directly by absurd: suppose it is not true, that is we can find a $G \in R^q(\phi)$ such that:
\begin{equation*}
G< |D\phi|_q \mbox{ on a set of $\mm$-positive measure.}
\end{equation*}
But then we can define: $\tilde G:=  \min \{ G ,  |D\phi|_q \} $ and by the first part of the statement, it follows $\tilde G \in R^q(\phi)$. This gives a contradiction to the minimality of $ |D\phi|_q.$
  Hence the proof of the statement is complete.

    \end{proof}
  
 Notice that the choice of a sequence $(\tilde \phi _n)_n$ and $\tilde H_2$ in order to prove the convexity and the second point of last statement is quite independent to $G$ but it strongly depends on the Leibniz formula. Unfortunately, at least using this approach it does not seem possible to define a relationship between $ H_2 $ and $ G $ in order that, once the minimum relaxed slope has been defined, this is equal to its associated map $H_2$.

However, an easy consequence of Proposition \ref{Locality and Minimality} is the following relation between the minimal intrinsic relaxed slope of an intrinsically Lipschitz section and its Lipschitz constant.
  \begin{prop} Let $\phi \in \LL \cap L^q(Y, \mm),$ then
\begin{equation*}
|D\phi|_q \leq Ils_a (\phi)(y) \,\, \mm -a.e. \mbox{ in } Y.
\end{equation*}
\end{prop}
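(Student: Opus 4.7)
The plan is to show that $Ils_a(\phi)$ itself belongs to the class $R^q(\phi)$, and then invoke the pointwise minimality of $|D\phi|_q$ established in Proposition \ref{Locality and Minimality}. The strategy is exactly the one that works in the classical theory of relaxed slopes for Lipschitz functions: the Lipschitz map is its own best approximation, and its asymptotic Lipschitz constant is its own relaxed slope.

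Concretely, I first need to check that $Ils_a(\phi) \in L^q(Y,\mm)$. This follows from the hypothesis $\phi \in \LL$, since then by Remark \ref{defrem} one has the pointwise bound $Ils_a(\phi) \leq ILS(\phi) < \infty$, so $Ils_a(\phi)$ is a bounded measurable function on $Y$, which together with the integrability already used throughout the section guarantees membership in $L^q(Y,\mm)$.

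Next, I verify the three conditions of Definition \ref{defIntrinsically relaxed slope} with $G := Ils_a(\phi)$, $H_1 := H_2 := Ils_a(\phi)$, and the constant approximating sequence $\phi_h := \phi$ for every $h\in\N$. Then $\phi_h \to \phi$ in $L^q(Y,\R^s,\mm)$ trivially, and $Ils_a(\phi_h) = Ils_a(\phi)$ converges strongly (hence weakly) to $H_2 = Ils_a(\phi)$ in $L^q(Y,\mm)$. The sandwich inequalities $0 \leq H_1 \leq G$ and $0 \leq H_1 \leq H_2$ become equalities and therefore hold. This shows $Ils_a(\phi) \in R^q(\phi)$.

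Finally, since $|D\phi|_q$ is the pointwise $\mm$-a.e. minimum over $R^q(\phi)$ by Proposition \ref{Locality and Minimality}, applying this minimality to the element $Ils_a(\phi) \in R^q(\phi)$ yields $|D\phi|_q \leq Ils_a(\phi)$ $\mm$-a.e. in $Y$. The only non-formal step is the $L^q$-integrability of $Ils_a(\phi)$, but the Lipschitz hypothesis makes this automatic; everything else is an immediate verification of the definition with the trivial constant sequence.
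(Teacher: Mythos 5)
Your proof is correct and follows essentially the same route as the paper's: the paper also observes that $Ils_a(\phi)$ is an intrinsic relaxed slope of $\phi$ by taking the constant approximating sequence $\phi_h:=\phi$, and then concludes by the pointwise minimality of $|D\phi|_q$ from Proposition \ref{Locality and Minimality}. Your write-up merely makes explicit the choices $G=H_1=H_2=Ils_a(\phi)$ and the $L^q$-membership check, which the paper leaves implicit.
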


  \begin{proof}  The statement follows immediately from the fact that for an intrinsically Lipschitz and bounded section, its slope  is a relaxed slope for $\phi$. Indeed one can take the constant approximation $(\phi_{h})_h := \phi $ for any $h\in \N,$ in the definition of relaxed slope.
\end{proof}
  
   We conclude this section given a key notion in order to define intrinsic Cheeger energy.
  
    \begin{defi}[Minimal intrinsic $q$-relaxed slope of a section]\label{Minimal intrinsically relaxed slope of a section}  Let $q\in (1,+\infty)$ and $\phi \in L^q(Y,\R^s, \mm)$  be a section of   $\pi .$  If $R^q(\phi)$ is no empty, which means that $\phi$ has at least one relaxed slope, then 
    \begin{equation*}
|D\phi |_q \mbox{ is called minimal  intrinsic $q$-relaxed slope of } \phi.
\end{equation*}
\end{defi}

%

The following statement give a condition in order to have a strong convergence of the  minimal intrinsic $q$-relaxed slope.
     
        \begin{prop}\label{Strong convergence} Let $q\in (1,+\infty)$ and $\phi \in L^q(Y, \R^s, \mm)$ be a section of $\pi.$ Then, the following properties hold:
     \begin{enumerate}
     \item   If $G \in R^q(\phi)$ with $H_2$ as in Definition \ref{defIntrinsically relaxed slope},  then there is $(\phi_n)_n \subset \LL$ converging to $\phi$ in $L^q(Y, \R^s, \mm)$ and $(G_n)_n \subset L^q(Y, \mm)$ strongly convergent to $H_2$ in $L^q(Y, \mm).$ 
     \item If $(G_n)_n \subset R^q(\phi _n), \, (\phi_n)_n \subset \LL$ such that $\phi _ n \rightharpoonup \phi $ in  $L^q(Y,\R^s,  \mm)$ and $G_n \rightharpoonup G$ in  $L^q(Y, \mm)$, then $G \in R^q(\phi).$ 
        \item Let $K \subset Y$ be a compact subset and $\psi := \phi _{|K}.$ Then if $G \in R^q(\phi)$ it holds
        \begin{description}
\item[ 3.a] $G _{|K} \in R^q(\psi);$
\item[ 3.b]  there are $H_1, H_2 \in L^q(K, \mm)$ such that
  \begin{itemize}
\item $0\leq  H_1(y) \leq  G(y)$ and  $0\leq H_1(y) \leq H_2 (y)$ for a.e. $y\in K.$ 
\item  There is a sequence $ (\psi _h)_h \subset \LL $ such that
\begin{equation*}
\psi _h \to \psi   \,\, \mbox{ in } L^q(K,\R^s, \mm)\quad \mbox{ and } \quad  Ils_a ( \psi_h) (.) \to  H_2 \,\, \mbox{ in } L^q(K, \mm).
\end{equation*}
\end{itemize}
  
\end{description}

\end{enumerate}
     \end{prop}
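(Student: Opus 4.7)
For Part 1, my plan is to apply Mazur's lemma to the weak convergence of $Ils_a$. By definition of $G \in R^q(\phi)$, we have $(\phi_h)_h \subset \LL$ with $\phi_h \to \phi$ in $L^q(Y, \R^s, \mm)$ and $Ils_a(\phi_h) \rightharpoonup H_2$ weakly in $L^q(Y, \mm)$. Mazur's lemma produces finite convex combinations $G_n := \sum_{k=n}^{N(n)} \alpha_{n,k}\, Ils_a(\phi_k)$ with $G_n \to H_2$ strongly. Setting $\phi_n := \sum_{k=n}^{N(n)} \alpha_{n,k}\, \phi_k$, iterated use of Theorem \ref{sommadiLipeLip.1} gives $\phi_n \in \LL$, and linearity of the convex combination together with $\phi_k \to \phi$ strongly yields $\phi_n \to \phi$ in $L^q(Y, \R^s, \mm)$.

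For Part 2, I plan a diagonal plus Mazur argument. For each $n$, Definition \ref{defIntrinsically relaxed slope} applied to $G_n \in R^q(\phi_n)$ provides $H_1^n, H_2^n \in L^q(Y, \mm)$ and $(\phi_{n,k})_k \subset \LL$ with $\phi_{n,k} \to \phi_n$ in $L^q(Y, \R^s, \mm)$ and $Ils_a(\phi_{n,k}) \rightharpoonup H_2^n$ weakly as $k \to \infty$. Choose $k_n$ large with $\|\phi_{n, k_n} - \phi_n\|_{L^q} < 1/n$, so that $\phi_{n, k_n} \rightharpoonup \phi$ weakly in $L^q(Y, \R^s, \mm)$. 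Applying Mazur simultaneously to the triple $(\phi_{n, k_n},\, Ils_a(\phi_{n, k_n}),\, G_n)$ in a product space produces common convex combinations converging strongly to $(\phi, \tilde H_2, G)$; Theorem \ref{sommadiLipeLip.1} keeps the combinations of $\phi_{n, k_n}$ inside $\LL$. Then the closedness and convexity of $R^q$ (Proposition \ref{Locality and Minimality}) supplies the required $H_1, H_2$ witnessing $G \in R^q(\phi)$, with $\tilde H_2$ playing the role of the weak limit of $Ils_a$ required by Definition \ref{defIntrinsically relaxed slope}.

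Part 3.a is essentially a restriction argument. From the data $H_1, H_2, (\phi_h)_h$ witnessing $G \in R^q(\phi)$, continuity of restriction gives $\phi_h|_K \to \psi$ in $L^q(K, \R^s, \mm)$, and from Definition \ref{def_ILS.3} the pointwise bound $Ils_a(\phi_h|_K)(z) \leq Ils_a(\phi_h)(z)$ for $z \in K$ holds because restricting shrinks the set of approaching pairs in the limsup. Hence $(Ils_a(\phi_h|_K))_h$ is bounded in $L^q(K, \mm)$ and, by reflexivity \cite[Theorem 4.10]{B10}, has a weak subsequential limit $H_2'$ with $H_2' \leq H_2|_K$ (testing against nonnegative functions in $L^{q'}(K, \mm)$). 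Setting $\tilde H_1 := \min\{H_1|_K, H_2'\}$ gives the common lower bound required by Definition \ref{defIntrinsically relaxed slope}, proving $G|_K \in R^q(\psi)$.

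Part 3.b is where I expect the main obstacle to lie. The plan is to rerun the Mazur scheme of Part 1 inside $K$: starting from the data produced in 3.a, Mazur yields $(\tilde \psi_n)_n \subset \LL$ with $\tilde \psi_n \to \psi$ in $L^q(K, \R^s, \mm)$ and an auxiliary $(G_n)_n$ converging strongly to some $H_2'$ in $L^q(K, \mm)$. The statement, however, demands strong convergence of $Ils_a(\tilde \psi_n)$ itself, and this is precisely where the non-linear Leibniz formula (Proposition \ref{sommadiLipeLip.2}) bites: iterating its pairwise inequality introduces a factor $(c/2)^{N-1}$ in the bound $Ils_a(\tilde \psi_n) \leq C(n)\, G_n$, so one must either keep convex combinations of uniformly bounded length or argue by a Banach--Saks style pairwise extraction. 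Compactness of $K$ supplies a pointwise uniform bound on $Ils_a$ along the sequence in $\LL$, and I would then exploit uniform convexity of $L^q(K, \mm)$ to turn the joint weak-plus-norm information into the strong convergence $Ils_a(\tilde \psi_n) \to H_2$ required by the statement, the resulting $H_2$ being possibly a rescaling of $H_2'$ still compatible with the lower bound $\tilde H_1$ built in 3.a.
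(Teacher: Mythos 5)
Your Part~1 is essentially the paper's argument: Mazur's lemma applied to $Ils_a(\phi_h)\rightharpoonup H_2$, with the corresponding convex combinations of the $\phi_h$ serving as the new approximating sequence (note the statement only asks for \emph{some} $(G_n)_n$ converging strongly to $H_2$, not for $G_n=Ils_a(\phi_n)$, so this is enough). One correction: Theorem~\ref{sommadiLipeLip.1} only gives that a sum of intrinsically Lipschitz sections is intrinsically Lipschitz; membership in $\LL$ also requires the fibre condition \eqref{equationSobolev10}, and the reference you actually need for stability of $\LL$ under convex combinations is Proposition~\ref{lem22b}, which is what the paper cites. Part~2 takes a genuinely different route: the paper proves that the graph $R=\{(\phi,G)\,:\,G\mbox{ is a relaxed slope of }\phi\}$ is convex and strongly closed, hence weakly closed, whereas you run a diagonal-plus-Mazur argument directly. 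Your version has a soft spot you do not address: after Mazur, what converges strongly is the convex combination of the $Ils_a(\phi_{n,k_n})$, not $Ils_a$ of the convex combination of the $\phi_{n,k_n}$, and Definition~\ref{defIntrinsically relaxed slope} requires a weak limit of the latter. Since the Leibniz formula (Proposition~\ref{sommadiLipeLip.2}) is superadditive with constant $c/2\geq 1$, iterating it over an $N$-term combination inflates the bound by $(c/2)^{N-1}$ --- precisely the obstruction you flag in 3.b but ignore in Part~2. Moreover, invoking the ``closedness and convexity of $R^q$'' from Proposition~\ref{Locality and Minimality} at that stage is not legitimate, since that proposition concerns $R^q(\phi)$ for a single fixed $\phi$, while your $G_n$ are relaxed slopes of varying $\phi_n$.

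The genuine gap is Part~3.b. The statement demands strong convergence of $Ils_a(\psi_h)$ itself in $L^q(K,\mm)$, and your closing step --- exploiting uniform convexity of $L^q(K,\mm)$ to turn ``joint weak-plus-norm information'' into strong convergence --- is not a valid deduction: the Radon--Riesz property upgrades weak to strong convergence only when the norms converge as well, and nothing in your construction supplies $\|Ils_a(\tilde\psi_n)\|_{L^q}\to\|H_2\|_{L^q}$. Compactness of $K$ does not help either: weak convergence in $L^q$ over a compact set does not imply strong convergence. The vague ``possibly a rescaling of $H_2'$'' also signals that the object you produce need not be the $H_2$ the statement asks for. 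For comparison, the paper simply declares Part~3 ``trivial'' and offers no argument; the analogous step in the proof of Theorem~\ref{theorem30may} (``converges weakly \dots and so also strongly since $K$ is compact'') rests on the same unjustified implication. So you have correctly located the hard point of the proposition, but neither your sketch nor the paper closes it: as written, 3.b remains unproved.
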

   
     \begin{proof} The third point is trivial. For the other points, we follows Lemma 4.3 in \cite{AGS08.2}. 
     
     (1). By definition, we know that there are $H_1, H_2 \in L^q(Y, \mm)$ such that

  \begin{itemize}
\item $0\leq  H_1(y) \leq  G(y)$ and  $0\leq H_1(y) \leq H_2(y)$ for a.e. $y\in Y.$ 
\item  There is a sequence $ (\phi _h)_h \subset \LL $ such that
\begin{equation*}
\phi _h \to \phi  \,\, \mbox{ in } L^q(Y,\R^s, \mm) \quad \mbox{ and } \quad  Ils _a( \phi_h) (.) \rightharpoonup  H_2 \,\, \mbox{ in } L^q(Y, \mm),
\end{equation*}
\end{itemize}
By Mazur's lemma (see Corollary 3.8 in \cite{B10}) we can find a sequence of convex combination $Ils_h$ of $ Ils_a ( \phi_h)$, starting from an index  $h(n) \to \infty$ strongly convergent to $H_2$ in $L^q(Y, \mm).$ The corresponding convex combinations of $\phi_h,$ that we shall denote by $\psi _h,$ still converge  to $\phi$ in $L^q(Y, \mm)$ and belongs to $\LL$ (see Lemma \ref{lem22b}).

(2). We need to prove that the set 
\begin{equation*}
R:=\{ (\phi , G) \in L^q(Y,\R^s,  \mm)\times L^q(Y, \mm) \, :\, \phi \in \LL , \, G \mbox { is a relaxed slope of } \phi\}.
\end{equation*} is weak closed in $L^q(Y, \R^s, \mm)\times L^q(Y, \mm).$ If we show that (2.a) $R$ is closed, it is sufficient to prove that (2.b) $R$ is strongly closed. 

(2.a). Let $(\phi _1, G_1), (\phi _2, G_2) \in R.$ We want that $(t\phi _1 + (1-t)\phi _2, tG_1 + (1-t)G_2) \in R$ for every $t\in [0,1].$ This follows immediately from   Proposition \ref{Locality and Minimality}, Lemma \ref{lem22b} and from the fact $L^q$ is convex.

(2.b). If  $(\phi _n , G_n)_n \subset R$ strongly converge to $(\phi , G)$ in $L^q(Y,\R^s, \mm)\times L^q(Y, \mm),$ we want to show that $(\phi , G) \in R.$  We can find sequences $(\psi _{n,h})_{n,h} \subset  \LL \cap L^q(Y, \mm)$ and $G_{n,h} \subset L^q(Y, \mm)$ such that
\begin{itemize}
\item $\psi _{n,h} \to \phi _{n}$ in $L^q(Y, \R^s, \mm)$ with $ Ils_a (\psi _{n,h}) \rightharpoonup_h  H_{2,n}$ in $L^q(Y, \mm);$ 
\item  $G_{n,h} \to  H_{2,n}$ in  $L^q(Y, \mm)$ with $0\leq H_{1,n} \leq G_n$ and $0\leq H_{1,n} \leq H_{2,n}$ a.e. $y\in Y.$
\end{itemize}
Possibly extracting a suitable subsequence, we can assume that $ H_{2,n} \rightharpoonup H_2 $ in $L^q(Y, \mm).$  Moreover, using the reflexivity of $L^q(Y, \mm),$ we get that, up to subsequences, $H_{1,n} \rightharpoonup H_1$ in $L^q(Y, \mm)$ (see,  \cite[Theorem 3.18]{B10}).
Hence, by standard diagonal argument we can find an increasing sequence $h \mapsto n(h)$ such that $\psi _{n,h(n)} \to \phi $  in $L^q(Y,\R^s, \mm)$ and $ Ils (\psi _{n,h(n)}) \rightharpoonup H_2$ in $L^q(Y, \mm)$  with $0\leq H_{1} \leq G$ and $0\leq H_{1} \leq H_{2}$ a.e. $y\in Y.$ Hence $(\phi , G ) \in R,$ as desired.

\end{proof}

   \begin{rem}
   We do not know if it is possible to get Proposition \ref{Strong convergence} (3) without the condition of compactness of $K.$ On the other hand it is no possible to apply the proof of Theorem 2.2.7 in \cite{P17} because the inequality (2.2.12) not works in our intrinsic context. The motivation is given by our $non \, usual$ Leibniz formula (see Proposition  \ref{sommadiLipeLip.2}).
   \end{rem}

     \subsection{Intrinsic Cheeger energy: definition} In this section  we consider $(Y,d,\mm)$ a  metric measure space and  $\pi :\R^s \to Y$   a  quotient and linear map. Set 
   \begin{equation}
\LL  :=  S \cap ILS_b(Y),
\end{equation}
where $S$ is the set of all   intrinsically $L$-Lipschitz sections of $\pi$  satisfying the inequality \eqref{equationSobolev10} for some $c\geq 2$.
 Using Proposition \ref{Locality and Minimality} for $q=2,$ we define the intrinsic Cheeger energy as follows. 

    \begin{defi}[Intrinsic Cheeger energy]\label{defCheeger energy}  The intrinsic Cheeger energy  is the functional $$iCh _a : L^2(Y , \mm) \to [0, +\infty]$$ defined as 
\begin{equation*}
\begin{aligned}
iCh_a (\phi) & := \inf\left\{ \liminf _{ n \to +\infty} \int_Y Ils _a^2 (\phi _n )(y)\, d\mm (y) \, :\, (\phi_n)_n \subset \LL,  \phi _n \to \phi \mbox{ in } L^2(Y, \R^s, \mm) \right\}.
\end{aligned}
\end{equation*}
The domain of the intrinsic Cheeger energy  is the subset of $ L^2(Y, \mm)$ defined by:
\begin{equation*}
\begin{aligned}
Dom(iCh_a) & :=\left\{ \phi \in L^2(Y, \R^s, \mm) \,:\,  iCh_a (\phi)  <+ \infty \right\}.\\
\end{aligned}
\end{equation*}
One other possibility could be to use the slope instead of the asymptotic Lipschitz constant, namely:
\begin{equation*}
\begin{aligned}
iCh_{Ils} (\phi) & := \inf\left\{ \liminf _{ n \to +\infty} \int_Y Ils ^2 (\phi _n)( y)\, d\mm (y) \, :\, (\phi_n)_n \subset \LL,  \phi _n \to \phi \mbox{ in } L^2(Y, \R^s, \mm) \right\}.
\end{aligned}
\end{equation*}
\end{defi}
  
 Remark \ref{remperchegeerdopo} directly implies the easier inequality:
 \begin{equation*}
iCh_a (\phi) \geq iCh_{Ils} (\phi),\quad \forall \phi \in Dom(iCh_a).
\end{equation*}

This two definitions of intrinsic Cheeger energy are build through a relaxation process; indeed, $iCh_a$ is the relaxation of the functional
\begin{equation*}
\Gamma _a(\phi):= \left\{
\begin{array}{l}
\liminf _{ n \to +\infty} \int_Y Ils_a ^2 (\phi _n)( y)\, d\mm (y), \quad \qquad \,\,  \mbox{if } \phi \in \LL \\
+\infty ,  \qquad \qquad  \qquad  \qquad \qquad  \qquad  \qquad \qquad  \mbox{if } \phi \in L^2(Y, \R^s, \mm) - \LL
\end{array}
\right.
\end{equation*}
with respect to the $L^2(Y, \mm)$ distance, in the sense that it is the biggest lower semicontinuous functional which is less or equal than $\Gamma _a.$ In a similar way, $iCh_{Ils}$ is the relaxation of the functional 
\begin{equation*}
\Gamma _{Ils}(\phi):= \left\{
\begin{array}{l}
\liminf _{ n \to +\infty} \int_Y Ils ^2 (\phi _n)( y)\, d\mm (y), \quad \qquad \,\,  \mbox{if } \phi \in \LL \\
+\infty ,   \qquad \qquad  \qquad  \qquad \qquad  \qquad  \qquad \qquad \mbox{if } \phi \in L^2(Y,\R^s, \mm) - \LL
\end{array}
\right.
\end{equation*}

In the classical case, it is well known that the space of Lipschitz maps is dense in $L^2(Y)$ but in the intrinsic case we do not have this property. However, we recall that  we get the trivial case when $s=1$.

      \subsection{Intrinsic Cheeger energy: equivalent condition} 
   The set $R^2(\phi)$ given by Definition \ref{defIntriRquadro} represents the relaxation on the asymptotic Lipschitz constant instead of as above where we do a relaxation on the functional $\Gamma _a$ and it is the key notion in order to give the following result.    
  \begin{theorem}\label{theorem30may}
  Let $\phi \in L^2(K,\R^s, \mm)$ with $K \subset Y$ compact and suppose that $R^2(\phi)$ is no empty. Then, we have the following representation formula of the Cheeger energy:
  \begin{equation*}
iCh_a(\phi)= \int _KH_2^2(y) \, d\mm (y),
\end{equation*}
where $H_2$ is given by Definition \ref{defIntrinsically relaxed slope} for $|D\phi|_2.$
  \end{theorem}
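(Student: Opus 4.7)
The plan is to establish the equality by proving the two inequalities $iCh_a(\phi)\le\int_K H_2^2\,d\mm$ and $iCh_a(\phi)\ge\int_K H_2^2\,d\mm$ separately, with the minimal relaxed slope $|D\phi|_2$ serving as the bridge between the definitional infimum and the integral of $H_2^2$.

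For the upper bound I would invoke Proposition \ref{Strong convergence} (3b) applied to $|D\phi|_2\in R^2(\phi)$ (whose restriction to the compact set $K$ remains a relaxed slope by (3a)). That proposition furnishes a sequence $(\psi_h)_h\subset\LL$ with $\psi_h\to\phi$ in $L^2(K,\R^s,\mm)$ and the crucial \emph{strong} convergence $Ils_a(\psi_h)\to H_2$ in $L^2(K,\mm)$. Strong $L^2$-convergence upgrades to convergence of the squared $L^2$-norms, so $\int_K Ils_a^2(\psi_h)\,d\mm\to\int_K H_2^2\,d\mm$, and testing the infimum in the definition of $iCh_a$ against this specific sequence yields $iCh_a(\phi)\le\int_K H_2^2\,d\mm$.

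For the lower bound I would fix a minimizing sequence $(\phi_n)_n\subset\LL$ with $\phi_n\to\phi$ in $L^2(K,\R^s,\mm)$ and $\liminf_n\int_K Ils_a^2(\phi_n)\,d\mm=iCh_a(\phi)$, assuming WLOG this value is finite. Boundedness of $(Ils_a(\phi_n))_n$ in $L^2(K,\mm)$ allows extraction of a weakly convergent subsequence $Ils_a(\phi_n)\rightharpoonup G$. Weak lower semicontinuity of the $L^2$-norm gives $\int_K G^2\,d\mm\le iCh_a(\phi)$, while Proposition \ref{Strong convergence} (2) certifies that $G\in R^2(\phi)$; in fact $G$ itself plays the role of an admissible $H_2$ in Definition \ref{defIntrinsically relaxed slope} for a suitable relaxed slope of $\phi$ (take the constant approximating sequence and $H_1=G$).

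The delicate step, and what I expect to be the main obstacle, is identifying the weak limit $G$ with the $H_2$ that appears in the statement, since Definition \ref{defIntrinsically relaxed slope} does not determine $H_2$ uniquely from $|D\phi|_2$: different approximating sequences produce different weak limits, and our non-linear Leibniz formula prevents the usual minimal-slope argument from pinning down a canonical one. The natural reading is that $H_2$ in the statement is the strong-$L^2$ limit singled out by Proposition \ref{Strong convergence} (3b), which should enjoy a minimality property among all admissible "$H_2$"s for $|D\phi|_2$; combining this minimality with the inequality $\int_K H_2^2\le\int_K G^2$ would close the argument. Making this minimality precise likely requires reapplying the Mazur-type convex extraction from Proposition \ref{Strong convergence} (1) together with the closedness and pointwise minimality from Proposition \ref{Locality and Minimality} to reduce the weak limit $G$ to the canonical $H_2$.
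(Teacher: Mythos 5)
Your overall strategy is the same as the paper's: split into two inequalities, obtain the upper bound by applying Proposition \ref{Strong convergence}(3) to $|D\phi|_2$ and using the strong convergence $Ils_a(\psi_h)\to H_2$ in $L^2(K,\mm)$ to pass to the limit in the squared norms, and obtain the lower bound from weak compactness of $(Ils_a(\phi_n))_n$ along a minimizing sequence together with the minimality of $|D\phi|_2$. The upper bound in your proposal is complete and coincides with the paper's argument.

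The genuine gap is precisely the step you flag in your last paragraph, and your proposed repair does not close it. From the weak limit $Ils_a(\phi_n)\rightharpoonup G$ you correctly get $iCh_a(\phi)\ge\int_K G^2\,d\mm$ and, since $G$ is itself an intrinsic relaxed slope of $\phi$ (take $H_1=H_2=G$ in Definition \ref{defIntrinsically relaxed slope}), Proposition \ref{Locality and Minimality} gives $|D\phi|_2\le G$ $\mm$-a.e., hence $iCh_a(\phi)\ge\int_K|D\phi|_2^2\,d\mm$. But the target is $\int_K H_2^2\,d\mm$, and the only relation the definitions provide between $H_2$ and $|D\phi|_2$ points the wrong way: $H_2$ is also a relaxed slope of $\phi$, so $|D\phi|_2\le H_2$ a.e.\ and $\int_K|D\phi|_2^2\le\int_K H_2^2$. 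The minimality of $H_2$ among all admissible weak limits, which you invoke to finish, is established nowhere: Definition \ref{defIntrinsically relaxed slope} constrains $H_2$ only through $H_1$, and the paper itself remarks after Proposition \ref{Locality and Minimality} that with this approach no relationship between $H_2$ and the minimal relaxed slope can be pinned down. Reapplying the Mazur extraction does not rescue this either, because convex combinations of the $\phi_n$ control $Ils_a$ only up to the factor $c/2\ge 1$ coming from the non-linear Leibniz formula of Proposition \ref{sommadiLipeLip.2}. You should be aware that the paper's own proof is equally thin at exactly this joint (it asserts the inequality ``using definition of minimal intrinsic relaxed slope,'' and additionally claims that weak convergence in $L^2(K,\mm)$ upgrades to strong convergence because $K$ is compact, which fails in an infinite-dimensional $L^2$ space); so you have diagnosed the weak point correctly, but neither your sketch nor the published argument supplies the identification of the weak limit with $H_2$ that the equality requires.
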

  
  \begin{proof}
We follows  the last part of the proof in \cite[Theorem 2.2.7]{P17}. Without loss of generality, we can suppose $\phi \in Dom (iCh_a).$ Hence, by definition of intrinsic Cheeger energy and Proposition \ref{Strong convergence} (3) applying to $|D\phi|_2$, we have that
\begin{equation*}
iCh_a(\phi) \leq  \liminf _{ n \to +\infty} \int_K Ils_a ^2 (\phi _n )(y)\, d\mm (y) = \int _K H_2^2(y) \, d \mm (y),
\end{equation*}
and this provides the upper bound on $iCh_a$. On the other hand, let $(\psi_n)_n \subset \LL$ be a sequence  such that 
\begin{itemize}
\item $\psi _n \to \phi$ in $L^2(K,\R^s, \mm);$
\item $ \liminf _{ n \to +\infty} \int_K Ils_a^2  (\psi _n )( y)\, d\mm (y) <\infty .$
\end{itemize}
 By reflexivity of $L^2$, we get that, up to a subsequence, $(Ils_a^2  (\psi _n))_n$ converges weakly in $L^2(K, \mm)$ and so also strongly since $K$ is compact. Finally, using definition of minimal intrinsic relaxed slope, we obtain
 \begin{equation*}
\liminf _{ n \to +\infty} \int_K Ils_a^2  (\psi _n )(y)\, d\mm (y)  \geq \int _ K H_2^2(y) \, d\mm (y).
\end{equation*}
Since $(\psi_n)_n$ is generic we end up with
\begin{equation*}
iCh_a(\phi) \geq  \int _K H_2^2(y) \, d \mm (y),
\end{equation*}
and we are done.
  \end{proof}

 \bibliographystyle{alpha}
\bibliography{DDLD}

\end{document}